\documentclass[11pt]{amsart}
\usepackage{float}
\usepackage[usenames]{color}
\usepackage{graphicx}
\usepackage{amssymb}
\usepackage{amscd}
\usepackage{makecell}
\usepackage{amsmath}
\usepackage{mathtools}
\hfuzz = 10pt

\usepackage{tikz}
\usetikzlibrary{graphs}
\usetikzlibrary{arrows,decorations.markings,plotmarks,decorations.markings}
\usetikzlibrary{positioning}	
\usetikzlibrary{snakes} 

\usepackage{enumitem}


\addtolength{\textheight}{.1\topmargin}
\addtolength{\textwidth}{.7\oddsidemargin}
\addtolength{\textwidth}{.7\evensidemargin}
\setlength{\topmargin}{.1\topmargin}
\setlength{\oddsidemargin}{.3\oddsidemargin}
\setlength{\evensidemargin}{.3\evensidemargin}

\parskip = 1.5pt

\setcounter{secnumdepth}{3}

\theoremstyle{plain}
\newtheorem{thm}{Theorem}[section]
\newtheorem{lemma}[thm]{Lemma}
\newtheorem{cor}[thm]{Corollary}

\newtheorem{prop}[thm]{Proposition}

\theoremstyle{definition}
\newtheorem{defi}[thm]{Definition}

\newtheorem{example}[thm]{Example}

\newtheorem{conjecture}{Conjecture}


\DeclareMathOperator{\ind}{ind}

\DeclareMathOperator{\Ann}{Ann}

\DeclareMathOperator{\spin}{spin}

\DeclareMathOperator{\overlap}{overlap}
\DeclareMathOperator{\sing}{singularity}
\DeclareMathOperator{\DI}{DI}
\begin{document}

\title{On the non-vanishing condition for $A_\mathfrak{q}(\lambda)$ of $U(p,q)$ in the mediocre range}

\author{Du Chengyu}
\address[Du]{School of Mathematical Sciences, Soochow University, Suzhou 215006,
	P.~R.~China}
\email{cydu0973@suda.edu.cn}

\abstract{The modules $A_\mathfrak{q}(\lambda)$ of $U(p,q)$ can be parameterized by their annihilators and asymptotic supports, both of which can be identified using Young tableaux. Trapa developed an algorithm for determining the tableaux of the modules $A_\mathfrak{q}(\lambda)$ in the mediocre range, along with an equivalent condition to determine non-vanishing. The condition involves a combinatorial concept called the overlap, which is not straightforward to compute. In this paper, we establish a formula for the overlap and simplify the condition for ease of use. We then apply it to $K$-types and the Dirac index of $A_\mathfrak{q}(\lambda)$.}

\endabstract



\maketitle


\newcommand{\td}{\mathrm{d}}
\newcommand{\C}{\mathrm{C}}
\newcommand{\e}{\mathrm{e}}
\newcommand{\id}{\mathrm{id}}
\newcommand{\coker}{\mathrm{coker}}
\newcommand{\im}{\mathrm{im~}}
\newcommand{\Hom}{\mathrm{Hom}}

\newcommand{\FF}{\mathscr{F}}
\newcommand{\GG}{\mathscr{G}}
\newcommand{\oo}{\mathcal{O}}

\newcommand{\Z}{\mathbb{Z}}
\newcommand{\proj}{\mathbb{P}}
\newcommand{\cpl}{\mathbb{C}}
\newcommand{\re}{\mathbb{R}}

\newcommand{\selfb}[1]{\noindent\fbox{\parbox{\textwidth}{#1}}}

\newcommand{\homo}{\tilde{H}}
\newcommand{\homd}{H^\Delta}
\newcommand{\homcw}{H^{\mathrm{CW}}}

\newcommand{\norm}[1]{\Vert #1 \Vert}

\newcommand{\fk}{\mathfrak{k}}
\newcommand{\fg}{\mathfrak{g}}
\newcommand{\ft}{\mathfrak{t}}
\newcommand{\fs}{\mathfrak{s}}
\newcommand{\fq}{\mathfrak{q}}
\newcommand{\fl}{\mathfrak{l}}
\newcommand{\fu}{\mathfrak{u}}

\newcommand{\normlambda}[1]{\Vert #1 \Vert_{\rm lambda}}
\newcommand{\normspin}[1]{\Vert #1 \Vert_{\spin}}

\numberwithin{equation}{section} 

\section{Introduction}

Let $G$ be a real reductive group. The cohomologically induced modules $A_\fq(\lambda)$ exhaust the unitary representations of $G$ when the infinitesimal character is real, integral and strongly regular \cite{Sa}. At singular infinitesimal characters the situation is much less understood. In the weakly fair range (see Definition \ref{def.positivity}), the $A_\fq(\lambda)$ modules still provide a long list of unitary modules. For the group $G=U(p,q)$, Vogan conjectured that the modules $A_\fq(\lambda)$ include all the unitary modules with integral infinitesimal characters:

\begin{conjecture}[Vogan]\cite{Trapa2001}\label{conj.vogan.begin}
	The cohomologically induced modules $A_\fq(\lambda)$ in the weakly fair range exhaust the unitary Harish-Chandra modules for $U(p,q)$ whose infinitesimal character is a weight-translate of $\rho$.
\end{conjecture}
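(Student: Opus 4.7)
The plan is to attack Vogan's conjecture by matching two lists of invariants: the list of unitary Harish-Chandra modules with integral infinitesimal character, organized by their annihilators and asymptotic supports, and the list of weakly fair $A_\fq(\lambda)$ modules, organized by the same invariants via Trapa's algorithm. Since for $U(p,q)$ both of these invariants are encoded by signed Young tableaux, the conjecture becomes the claim that two sets of tableaux (plus lowest-$K$-type data) coincide.

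First I would set up the parameterization on each side. On the unitary side, attach to an irreducible $\pi$ with infinitesimal character a weight-translate of $\rho$ the signed tableau recording its annihilator and associated variety (using Barbasch--Vogan for primitive ideals and Bozicevic--Trapa--Vogan type results for $\AV$), together with a distinguished lowest-$K$-type datum. On the cohomologically induced side, use Trapa's algorithm, simplified via the overlap formula established later in this paper, to enumerate exactly which tableaux arise from weakly fair $A_\fq(\lambda)$ modules. The easy direction, that every weakly fair $A_\fq(\lambda)$ is unitary, is already known by Vogan's unitarizability theorem; the content is the opposite inclusion.

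For the hard direction I would proceed in two stages. Within a fixed tableau class, use the Dirac index computations carried out in the last section of this paper, combined with the Parthasarathy--Dirac inequality, to show that any putative unitary $\pi$ with that tableau must have the same lowest-$K$-type signature as the corresponding $A_\fq(\lambda)$, and then appeal to a Vogan-type uniqueness statement (the lowest-$K$-type plus infinitesimal character plus annihilator should determine $\pi$). Across tableau classes, reduce from a general weakly fair $\lambda$ to the mediocre range by Zuckerman translation, using the compatibility of cohomological induction with translation functors so that the simplified non-vanishing criterion of this paper applies uniformly.

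The main obstacle, and the step I expect to be genuinely delicate, is ruling out unitary modules at the most singular infinitesimal characters that do not appear in the $A_\fq(\lambda)$ list. These candidates arise naturally as isolated components of degenerate principal series, and distinguishing them from true $A_\fq(\lambda)$'s requires either producing an explicit $K$-type on which the invariant Hermitian form fails to be positive semidefinite, or else a Dirac-cohomological obstruction showing that $\ker D / \im D$ cannot have the required structure. A uniform mechanism for doing this across all partitions with a given shape seems to be missing from the literature, and in my view this is the crux of the problem: the combinatorial parameterization and the overlap formula developed here give us a complete target list, but verifying that nothing outside the list survives the unitarity constraint is where the argument must meet genuinely analytic input.
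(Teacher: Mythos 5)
The statement you are trying to prove is presented in the paper as an open \emph{conjecture}, not a theorem: the paper offers no proof of it, notes only that the case $U(p,2)$ was recently settled in \cite{WZ24} by methods going well beyond anything in this paper, and says merely that the overlap formula developed here may ``help prove the conjecture\ldots{} in the future.'' So there is no proof in the paper to compare yours against, and your text is in any case a research program rather than a proof --- you say so yourself in the final paragraph, where you admit that ``a uniform mechanism\ldots{} seems to be missing from the literature'' for the decisive step.

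To be concrete about where the plan breaks down beyond that admitted gap: (i) the Dirac index results of Section 5 apply only to modules already known to be of the form $A_\fq(\lambda)$ in the nice range --- they compute $\DI(A_\fq(\lambda))$ from the data $(p_i,q_i)$ and $\lambda$, and give you no handle on the lowest $K$-types or Dirac cohomology of an \emph{arbitrary} unitary module with a prescribed annihilator and asymptotic support, which is what your second stage requires; (ii) the ``Vogan-type uniqueness statement'' you invoke (infinitesimal character plus annihilator plus lowest $K$-type determines the irreducible module) is not established here and is not a formal consequence of Theorem \ref{BarbaschVogan}, which only says the pair (annihilator, asymptotic support) is an injective invariant --- it does not tell you that every pair realized by a unitary module is realized by a weakly fair $A_\fq(\lambda)$; and (iii) the translation-functor reduction from weakly fair to mediocre goes in the wrong direction --- mediocre is the \emph{wider} range, and the genuinely hard singular points are exactly where translation functors fail to be equivalences. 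The combinatorial machinery of this paper gives a clean description of the target list; it does not, and cannot by itself, supply the positivity/unitarity input needed to show nothing outside the list is unitary.
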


In the case of $U(p,q)$, an $A_\fq(\lambda)$ in the weakly fair range is either irreducible or zero, and this fact still holds up to the mediocre range (see Definition \ref{def.positivity}). Therefore, a natural problem is to establish a non-vanishing criterion for $A_\fq(\lambda)$. Trapa established such a criterion in \cite[Section 7]{Trapa2001} when studying the algorithm of the annihilators of $A_\fq(\lambda)$ using $\nu$-quasitableaux (see Definition \ref{def.anti-tableau}). Here we briefly state the criterion. He first constructed a  $\nu$-quasitableaux for each $A_\mathfrak{q}(\lambda)$, and divided each $\nu$-quasitableau into unions of difference-one skew columns (see Definition \ref{def.skew.column}). Then he introduced two new definitions, overlap and singularity, for each pair of adjacent difference-one skew columns. Suppose the $A_\mathfrak{\fq}(\lambda)$ is nonzero, then for each pair of adjacent difference-one skew columns, the singularity should be no larger than the overlap. However, the definition of the overlap involves heavy calculations on Young tableaux. This fact makes the criterion not efficient in applications.

In this paper, we prove a formula for the overlap. The formula simply adds up some integers which are given by the $\theta$-stable parabolic subalgebra $\fq$. We then render the criterion in \cite{Trapa2001} operable for applications. In particular, this criterion becomes pretty simple for $A_\fq(\lambda)$ in the nice range which is a smaller range than the weakly fair range, see Theorem \ref{main.result}. We will apply this criterion to the study of $K$-types and the Dirac index of $A_\fq(\lambda)$. 

It is worth noting that Conjecture \ref{conj.vogan.begin} is proved in the case of $U(p,2)$ in \cite{WZ24}. Hopefully, our result in this paper can help prove the conjecture for general $U(p,q)$ in the future.

This paper is organized as follows. Section 2 will provide some preliminaries. In section 3, we will prove a formula for the overlap. In section 4, we update the non-vanishing criterion for $A_\fq(\lambda)$ and apply it to the $K$-types. We prove that $K$-types of a particular highest weight must exist in $A_\fq(\lambda)$. Section 5 applies the criterion to the Dirac index. We give a non-vanishing criterion for the Dirac index of $A_\fq(\lambda)$ in the nice range.

\section{Preliminaries}

Let $G=U(p,q)$ be the group of complex linear transformations of $\cpl^{p+q}$ preserving a Hermitian form defined by a diagonal matrix $I_{p,q}$ with $p$ pluses and $q$ minuses on the diagonal. The way we arrange those signs in $I_{p,q}$ will be precisely described in the later subsection. Let $K\cong U(p)\times U(q)$ be the fixed points of the Cartan involution of inverse conjugate transpose. On the Lie algebra level, let $\theta$ be the differentiated Cartan involution and let $\fg_0=\fk_0+\fs_0$ be the corresponding Cartan decomposition. We drop the subscript for complexification.

\subsection{$K$-conjugacy classes of $\theta$-stable parabolic subalgebras of $\fg$}

We will need a very explicit description of $K$-conjugacy classes of $\theta$-stable parabolic subalgebras $\fq=\fl\oplus\fu$ of $\fg$. Let $\{(p_1,q_1),\cdots,(p_r,q_r)\}$ be an ordered sequence of pairs of non-negative integers and we assume each pair is not $(0,0)$. Set $p=\sum_i p_i$, $q=\sum_iq_i$. The matrix $I_{p,q}$ is defined in the following way: $I_{p,q}$ consists of $p_1$ pluses, then $q_1$ minuses, then $p_2$ pluses, and so on. Let $E_{ij}$ be the matrix where the $(i,j)$-th entry is $1$ and the rest entries are zero. Then $E_{ij}$ is in $\fk$ if and only if both the $i$-th and the $j$-th diagonal entry of $I_{p,q}$ have the same sign.

Fix the diagonal torus $T\subset K$ with Lie algebra $\ft_0$, and set $\ft_\re=\sqrt{-1}\ft_0$. Write $\Delta(\fg,\ft)$ for the roots of $\ft$ in $\fg$ and make standard choice of positive roots, $\Delta^+(\fg,\ft)=\{ e_i-e_j|i<j\}$. Let $\rho$ denote the half-sum of all positive roots. Then the simple roots of $\mathfrak g$ with respect to this choice of positive roots are $\alpha_i:=e_i-e_{i+1}$. And, a weight $\nu=(v_1,\dots,v_n)\in\ft_\re^\ast$ is $\fg$-dominant if and only if $\nu_1\geqslant \nu_2\geqslant\cdots\geqslant\nu_n$. 

Denote $\Delta(\fk,\ft)$ as the root system of $\fk$. Then a root $e_i-e_j$ is in $\Delta(\fk,\ft)$ if and only if the $i$-th and the $j$-th diagonal entry of $I_{p,q}$ have the same sign. We choose $\Delta^+(\fk,\ft)=\Delta^+(\fg,\ft)\cap \Delta(\fk,\ft)$. Then a weight $\nu=(v_1,\dots,v_n)\in\ft_\re^\ast$ is $\mathfrak k$-dominant if and only if $\nu_i\geqslant \nu_j$ for all $i,j$ such that $e_i-e_j\in \Delta^+(\fk,\ft)$.

Let $\fl$ denote the block diagonal subalgebra $\mathfrak{gl}(n_1,\cpl)\oplus\cdots\oplus\mathfrak{gl}(n_r,\cpl)$, and let $\fu$ denote the strict block upper-triangular subalgebra with respect to $\fl$. Write $\fq=\fl\oplus\fu$. Then $\fq$ is a $\theta$-stable parabolic subalgebra of $\fg$. As the ordered sequences range over all ordered pairs non-negative integers $\{(p_1,q_1),\cdots,(p_r,q_r)\}$ such that $p=\sum_i p_i, q=\sum_i q_i$, all the $\fq$ constructed in this way exhaust the $K$-conjugacy classes of $\theta$-stable parabolic subalgebras of $\fg$, see \cite[Example 4.5]{V97}. A $\theta$-stable parabolic $\fq=\fl\oplus\fu$ \textbf{attached to} $\{(p_1,q_1),\cdots,(p_r,q_r)\}$ means the one described above. 

\subsection{The module $A_\mathfrak{q}(\lambda)$ and ranges of positivity}

Let $\mathfrak q$ be a $\theta$-stable parabolic subalgebra attached to a sequence $\{(p_1,q_1),\cdots,(p_r,q_r)\}$, and write $n_i=p_i+q_i$. Let $L$ be the Levi subgroup of $\fq$. Any unitary 1-dimensional $(\fl,L\cap K)$-module $\cpl_\lambda$, restricted to $T$, has differential
\begin{equation}\label{lambda.coor}
	\lambda=(
	\overbrace{\lambda_1,\cdots,\lambda_1}^{n_1}|
	\cdots\cdots|
	\overbrace{\lambda_r,\cdots\lambda_r}^{n_r}
	)\in\ft_\re^\ast,
\end{equation}
with each $\lambda_i\in\Z$. By definition, the module $A_\fq(\lambda)$ is the cohomologically induced module $\mathcal L_{\dim \fu\cap\fk}(\cpl_\lambda)$, where
$$\mathcal L_j(\cpl_\lambda)=(\Pi_{\fg,L\cap K}^{\fg,K})_j\big(\ind_{\bar\fq,L\cap K}^{\fg,L\cap K}(\cpl_\lambda\otimes_\cpl\wedge^\text{top} \mathfrak{u})\big).$$
Here $\Pi_j$ is the derived Bernstein functor. Note that the infinitesimal character of $A_\mathfrak{q}(\lambda)$ is $\lambda+\rho$. See more detail of cohomological induction in \cite{KnappVoganBook}.

In the introduction, we mentioned some ranges of positivity for $\lambda$ and $\mathfrak q$. Here we list their definitions.

\begin{defi}\label{def.positivity}\cite{KnappVoganBook}
	The $\lambda$ is said to be in the \textbf{weakly good} (resp. \textbf{good}) range if $\lambda+\rho$ is $\fg$-dominant (resp. strongly $\fg$-dominant).  The $\lambda$ is said to be in the \textbf{weakly fair} (resp. \textbf{fair}) range if $\lambda+\rho(\fu)$ is $\fg$-dominant (resp. strongly $\fg$-dominant). The character $\lambda$ is said to be in the \textbf{mediocre} range for $\fq=\fl\oplus\fu$ if $\ind^\fg_{\bar{\fq}}(\cpl_{\lambda+t\rho(\fu)}\otimes \bigwedge^{\text{top}}\fu )$ is irreducible for all $t\geqslant 0$. 
\end{defi}

The next lemma makes these ranges explicit in the $U(p,q)$ setting.

\begin{lemma}\label{RangeOfPositivityLambda}\cite{Trapa2001}
	Suppose $G=U(p,q)$. Let $\fq=\fl\oplus\fu$ be attached to $\{(p_1,q_1),\cdots,(p_r,q_r)\}$, and let $\cpl_\lambda$ be a 1-dimensional $(\fl,L\cap K)$-module. Write
	\begin{equation}\label{Lambda+RhoCoor}
		\lambda+\rho=\nu=(
		\nu_1^{(1)},\cdots,\nu_{n_1}^{(1)};
		\cdots\cdots;
		\nu_1^{(r)},\cdots,\nu_{n_r}^{(r)}
		)
	\end{equation}
	where $n_i=p_i+q_i$. Then the definition of (weakly) good, (weakly) fair, and mediocre can be equivalently rephrased in the following way.
	\begin{enumerate}
\item The $\lambda$ is in the weakly good (resp. good) range if, for each pair $i<j$, $\nu^{(i)}_{n_i}\geqslant\nu^{(j)}_1$, (resp. $\nu^{(i)}_{n_i}>\nu^{(j)}_1$);
\item The $\lambda$ is in the weakly fair (resp. fair) range if, for each pair $i<j$, $\nu^{(i)}_1+\nu^{(i)}_{n_i}\geqslant\nu^{(j)}_1+\nu^{(j)}_{n_j}$ (resp.$\nu^{(i)}_1+\nu^{(i)}_{n_i}>\nu^{(j)}_1+\nu^{(j)}_{n_j}$);
\item The $\lambda$ is in the mediocre range if, for each pair $i<j$, either $\nu^{(i)}_1\geqslant \nu^{(j)}_1$ or $\nu^{(i)}_{n_i}\geqslant \nu^{(j)}_{n_j}$.
	\end{enumerate}
\end{lemma}

From the above lemma, one can easily see that the mediocre range contains the fair range, and the fair range contains the good range. This fact is true not only for $U(p,q)$, but also for general cases (\cite[Theorem 5.105]{KnappVoganBook}). These ranges are related to the singularity of the infinitesimal character of $A_\mathfrak{q}(\lambda)$. When $\lambda$ is in the good range, there is no singularity in the infinitesimal character. And the singularity accumulates starting from the weakly good range.

When discussing the $A_\mathfrak{q}(\lambda)$ module of $U(p,q)$, we introduce one more range of positivity, which is called \emph{nice}. Trapa originally defined the \emph{nice position} in the context of Young tableaux in \cite[Section 6]{Trapa2001}. Here we extend the notion to $A_\mathfrak{q}(\lambda)$. We will see the importance of the nice range in Section 4.

\begin{defi}\label{Def-Nice}
	Let $\fq$, $\lambda$ and $\nu$ be the same as in Lemma \ref{RangeOfPositivityLambda}. We say $\lambda$ is in the \textbf{nice} range for $\fq$ if, for each pair $i<j$, we have both
	$\nu^{(i)}_1\geqslant \nu^{(j)}_1$ and $\nu^{(i)}_{n_i}\geqslant \nu^{(j)}_{n_j}$.
\end{defi}

It is not hard to see that
\begin{center}
	weakly good $\Rightarrow$ nice $\Rightarrow$ weakly fair $\Rightarrow$ mediocre.
\end{center}

A module $A_\mathfrak{q}(\lambda)$ is said to be good, or in the good range if $\lambda$ is in the good range for $\mathfrak q$. Similar terminology applies for weakly good, nice, fair, weakly fair, and mediocre. When $\lambda$ is in the mediocre range, $A_\fq(\lambda)$ is either irreducible unitary or zero. This fact is correct for $U(p,q)$ but not for general reductive groups. Chapter 8 of \cite{KnappVoganBook} provides sufficient conditions from which to conclude this fact.

\subsection{Young Tableaux and $A_q(\lambda)$ modules of $U(p,q)$}

We first show the definition of $\nu$-tableaux and $(p,q)$-signed tableaux.

\begin{defi}\label{def.young.diag}
Given a partition of a positive integer $k=k_1+k_2+\cdots+k_s$ with $k_i$ decreasing, we can associate a left-justified arrangement of $k$ boxes, where the $i$-th row contains $k_i$ boxes. Such an arrangement is called a \textbf{Young diagram} of size $k$.
\end{defi}

\begin{defi}\label{def.anti-tableau}
	If $\nu=(\nu_1,\cdots,\nu_k)$ is a $k$-tuple of real numbers, a \textbf{$\nu$-quasitableau} is defined to be any arrangement of $\nu_1,\cdots,\nu_k$ in a Young diagram of size $k$. If a $\nu$-quasitableau satisfies the condition that the entries weakly increase across rows and strictly increase down columns, it is called a $\nu$-tableau. Replacing "increase" by "decrease" in the definition of $\nu$-tableau defines a \textbf{$\nu$-antitableau}. The underlying Young diagram of a quasitableau/antitableau is called its shape.
\end{defi}

\begin{defi}\label{def.signed.tableau}
	A \textbf{$(p,q)$-signed tableau} is an equivalence class of Young diagrams whose boxes are filled with $p$ pluses and $q$ minuses so that the signs alternate across rows; two signed Young diagrams are equivalent if they can be made to coincide by interchanging rows of equal length. For writing convenience, we may drop $(p,q)$ and just call it a signed tableau if there is no ambiguity.
\end{defi}

An $A_\fq(\lambda)$ module of $U(p,q)$ can be determined by a pair consisting of its annihilator and its asymptotic support. By Joseph's parameterization of primitive ideals of $\mathfrak{gl}(n,\cpl)$, an annihilator can be represented by a $\nu$-antitableau. And by \cite[Theorem 9.3.3]{CMc}, an asymptotic support can be represented by a $(p,q)$-signed tableaux. In sections 4 and 5 of \cite{Trapa2001}, there is detail about the annihilators and asymptotic supports.

\begin{thm}\label{BarbaschVogan}\cite{BarbaschVogan}
	Let $\rho+\Z^n$ be a weight lattice translate of the infinitesimal character of the trivial representation. Suppose $\nu\in \rho+\Z^n$. The map assigning an irreducible Harish-Chandra module for $U(p,q)$ with infinitesimal character $\nu$ to the pair consisting of its annihilator and its asymptotic support is an injection. On the level of tableaux, the map assigns a $\nu$-antitableau and a $(p,q)$-signed tableau of the same shape to each irreducible module with infinitesimal character $\nu$, and any such pair arises in this way. 
\end{thm}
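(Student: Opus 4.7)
My plan is to split the desired bijection into two pieces: (annihilator $\leftrightarrow$ $\nu$-antitableau) and (asymptotic support $\leftrightarrow$ signed $(p,q)$-tableau), then reconcile the shapes, and finally prove injectivity of the combined invariant.

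For the annihilator, I would invoke Joseph's classification of primitive ideals in $U(\fg)$ for $\fg=\mathfrak{gl}(n,\cpl)$. At a regular integral infinitesimal character, primitive ideals correspond bijectively to left cells of the symmetric group, which by Robinson--Schensted correspond to standard Young tableaux. The Jantzen--Zuckerman translation principle then extends this to singular integral $\nu$, producing the $\nu$-antitableau (fill the Young diagram with the multiset of entries of $\nu$ so rows weakly decrease and columns strictly decrease). The shape of the tableau is the Jordan type of the complex nilpotent orbit $\oo^\cpl_M$ attached to $\Ann(M)$.

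For the asymptotic support, I would use the fact that $\AV(M)$ is a closed $K$-invariant subvariety of the nilpotent cone of $\fs^\ast$, which is a union of closed $K$-orbits. The Kostant--Rallis/Sekiguchi--Djokovic classification parameterizes these $K$-orbits by signed $(p,q)$-tableaux. To match the two shapes, I would apply Vogan's theorem that $\AV(U(\fg)/\Ann M)=\overline{\oo^\cpl_M}$ together with the fact that $\overline{\oo^\cpl_M}\cap \fs^\ast$ breaks into exactly those $K$-orbits whose underlying (unsigned) shape equals that of $\oo^\cpl_M$; by definition $\AV(M)$ lies in this union, so the signed tableau and the antitableau have the same shape.

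The main obstacle is injectivity: for a general real reductive group, distinct irreducibles can share both annihilator and associated variety, and what has to be used here is the type-$A$-specific multiplicity-one phenomenon. I would deduce it from the structure of Harish-Chandra cells for $U(p,q)$: inside a single double cell, the representation of $W\times W$ is (up to the $W$-representation attached to the cell) the permutation representation on pairs of tableaux of the fixed shape, and Vogan's cell theory shows that the irreducibles in the cell are in bijection with $K$-orbits of the prescribed shape. Combined with the annihilator datum (a column of the cell), this forces $(\Ann M,\AV M)$ to determine $M$ uniquely. Surjectivity would then be obtained by explicit construction: for each antitableau/signed tableau pair one produces a candidate irreducible---for instance a cohomologically induced $A_\fq(\lambda)$ whose block structure mirrors the signed tableau---and verifies the invariants agree, which exhausts the image by a counting argument comparing the number of pairs with the number of irreducibles at infinitesimal character $\nu$.
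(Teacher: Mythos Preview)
The paper does not actually prove this theorem: it is stated with a citation to \cite{BarbaschVogan}, and the only commentary offered is the sentence immediately following it, noting that the $\nu$-antitableau comes from Joseph's parameterization of primitive ideals of $\mathfrak{gl}(n,\cpl)$ and the signed tableau from the classification of nilpotent $K$-orbits in \cite{CMc}, 9.3.3. So there is no proof in the paper to compare your proposal against; the result is imported wholesale.

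Your outline of the two tableau interpretations is accurate and matches the paper's brief remarks. The shape-compatibility step via $\AV(U(\fg)/\Ann M)=\overline{\oo^\cpl_M}$ is the standard argument. Your injectivity sketch via Harish-Chandra cells is in the right spirit---the type-$A$ multiplicity-one phenomenon for cells is indeed what drives the result in \cite{BarbaschVogan}---though as written it is more of a signpost than an argument.

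There is, however, a genuine weakness in your surjectivity step. Proposing to realize an arbitrary (antitableau, signed tableau) pair by an $A_\fq(\lambda)$ is both insufficient and, in the context of this paper, circular: not every irreducible Harish-Chandra module of $U(p,q)$ is of the form $A_\fq(\lambda)$, and the entire point of the present paper is to use Theorem~\ref{BarbaschVogan} as an input in order to analyze which $A_\fq(\lambda)$ are nonzero and what their tableau parameters are. The clean way to get surjectivity is the counting argument you mention at the end---match the number of irreducibles at infinitesimal character $\nu$ (via the Langlands classification, i.e.\ $K$-orbits on the flag variety with suitable local systems) against the number of same-shape pairs---without invoking $A_\fq(\lambda)$ at all.
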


Given an $A_\mathfrak{q}(\lambda)$ module of $U(p,q)$, the following algorithm shows how to determing its $(p,q)$-signed tableau. Note that the signed tableau is independent of the parameter $\lambda$.

\begin{lemma}\label{sign.tab.for.fq}\cite[Lemma 5.6]{Trapa2001}
Suppose $\fq$ is attached to $\{ (p_1,q_1),\cdots,(p_r,q_r) \}$ and let $p=\sum p_i$, $q=\sum q_i$. Write $n_i=p_i+q_i$. The $(p,q)$-signed tableau $S_\pm$ is constructed as follows. 

For the first $p_1$ pluses and $q_1$ minuses, put all of them in the first column. There are $n_1$ rows and each row has only one sign. The order does not matter. If $r=1$, then the construction is done.

If $r>1$, the construction is continued by induction as follows. In the $i$-th step, we obtain a signed tableau, denoted by $S^i_\pm$, by adding $p_i$ pluses and $q_i$ minuses to the tableau $S^{i-1}_\pm$ constructed in the previous step. Now, suppose we already have $S^{i-1}_\pm$ and plan to construct $S^{i}_\pm$. We should follow these rules:
	\begin{enumerate}
		\item At most one sign is added to each row-end; add signs from the top to bottom, until the new signs run out. In total there are $p_{i}$ pluses and $q_{i}$ minuses.
		\item If a row ends with a plus, add a minus to it; if a row ends with a minus, add a plus to it. Equivalently, in each row the signs are arranged alternatively.\label{rule.build.sign.tab.alter.signs}
		\item Start the arrangement at the first row of $S^{i}_\pm$. If pluses or minuses run out when doing (\ref{rule.build.sign.tab.alter.signs}), skip the row. Otherwise, do not skip.
		\item If all rows of $S^{i}_\pm$ have new signs added and there are still more signs remaining, start a new row for each sign left over.
		\item After all $n_i$ signs are arranged, the resulting diagram may not necessarily have rows of decreasing length, but one can choose a tableau equivalent to $S^{i-1}_\pm$ and do the process again so that the result does have rows of decreasing length.
	\end{enumerate}
Eventually, any tableaux we obtain must be a representative of the same signed tableau $S_\pm$. We say that this $S_\pm$ \textbf{is attached to $\fq$}.
\end{lemma}
\begin{example}\label{example.sign.tab}
Let $\fq$ be a $\theta$-stable parabolic subalgebra attached to $\{(2,1),(3,1),(0,2)\}$. The signed tableau attached to $\fq$ is constructed in the following picture. Notice that from step 1 to step 2, we need to switch the second row and the third row due to rule (5) of Lemma \ref{sign.tab.for.fq}. Similarly, we need to switch the first row and the second row from step 2 to step 3. In step 3, we skip the second row when adding minus signs as mentioned in rule (3) of Lemma \ref{sign.tab.for.fq}. 
\begin{center}\begin{tikzpicture}
		\foreach \x in {0,-0.5,-1}
		\draw (0,\x) rectangle (0.5,\x-0.5);
		\node at (0.25,-0.25) {$+$};
		\node at (0.25,-0.75) {$+$};
		\node at (0.25,-1.25) {$-$};
		
		\foreach \y in {1,4}
		\draw[->] (\y,-1) -- (\y+1,-1);
		
		\foreach \x in {0,-0.5,-1,-1.5,-2}
		\draw (2.5,\x) rectangle (3,\x-0.5);
		\foreach \x in {0,-0.5}
		\draw (3,\x) rectangle (3.5,\x-0.5);
		
		\node at (2.75,-0.25) {$+$}; \node at (3.25,-0.25) {$-$};
		\node at (2.75,-0.75) {$-$}; \node at (3.25,-0.75) {$+$};
		\node at (2.75,-1.25) {$+$};
		\node at (2.75,-1.75) {$+$};
		\node at (2.75,-2.25) {$+$};
		
		\foreach \x in {0,-0.5,...,-2}
		\draw (5.5,\x) rectangle (6,\x-0.5);
		\foreach \x in {0,-0.5,-1}
		\draw (6,\x) rectangle (6.5,\x-0.5);
		\draw (6.5,0) rectangle (7,-0.5);
		
		\node at (5.75,-0.25) {$-$}; \node at (6.25,-0.25) {$+$};
		\node at (5.75,-0.75) {$+$}; \node at (6.25,-0.75) {$-$};
		\node at (5.75,-1.25) {$+$};
		\node at (5.75,-1.75) {$+$};
		\node at (5.75,-2.25) {$+$};
		
		\node at (6.75,-0.25) {$-$};
		\node at (6.25,-1.25) {$-$};
\end{tikzpicture}\end{center}
\end{example}

After the signed tableau of $A_\fq(\lambda)$ is obtained, one can then easily construct the $(\lambda+\rho)$-antitableau as stated in the following theorem, which works for $A_\fq(\lambda)$ in the good range. Later in section 4, we will prove that the following theorem can be extended to the nice range.

\begin{thm}\label{AqLambdaToTAbleau}\cite[Theorem 6.4]{Trapa2001}
	Let $\fq=\fl\oplus\fu$ be attached to $\{(p_1,q_1),\cdots,(p_r,q_r)\}$, and let $\cpl_\lambda$ be a 1-dimensional $(\fl,L\cap K)$-module in the good range for $\fq$. Write $n_i=p_i+q_i$ and
	\begin{equation}
\lambda+\rho=\nu=(
\nu_1^{(1)},\cdots,\nu_{n_1}^{(1)};
\cdots\cdots;
\nu_1^{(r)},\cdots,\nu_{n_r}^{(r)}
)\in\ft_\re^\ast.
	\end{equation}
The tableau parameters (Theorem \ref{BarbaschVogan}) of $A_\fq(\lambda)$ are obtained inductively as follows. Start with the empty pair of tableaux and assume that the $(s-1)$-th step has been completed giving a pair $(S^{s-1},S^{s-1}_\pm)$. $S_\pm^{s}$ is obtained by adding $p_s$ pluses and $q_s$ minuses to $S_\pm^{s-1}$ according to the algorithm in Lemma \ref{sign.tab.for.fq}; $S^{s}$ is the tableau of the same shape as $S_\pm^{s}$ obtained by adding the coordinates $\nu_1^{(s)},\cdots,\nu_{n_s}^{(s)}$ sequentially from top to bottom in the remaining unspecified boxes.
\end{thm}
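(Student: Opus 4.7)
The plan is to proceed by induction on $r$, the length of the sequence $\{(p_1,q_1),\dots,(p_r,q_r)\}$ attached to $\fq$, exploiting the fact that the prescribed algorithm itself is inductive: at step $s$ one appends a single block. The corresponding representation-theoretic operation is induction in stages. Concretely, write $\fq=\fq'\cap\fg$ where $\fq'$ is a two-block $\theta$-stable parabolic with Levi $\fl'\cong\mathfrak{gl}(n-n_r,\cpl)\oplus\mathfrak{gl}(n_r,\cpl)$, and let $\fq_{r-1}\subset\mathfrak{gl}(n-n_r,\cpl)$ be attached to $\{(p_1,q_1),\dots,(p_{r-1},q_{r-1})\}$. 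Induction in stages identifies $A_\fq(\lambda)$ with $\mathcal{L}^\fg_{\fq'}\bigl(A_{\fq_{r-1}}(\lambda')\boxtimes \cpl_{\lambda_r}\bigr)$, and the good-range hypothesis is preserved at each stage; in particular the inducing module is irreducible unitary and the cohomologically induced module is nonzero and irreducible. The base case $r=1$ is immediate: $\fq=\fg$, $A_\fq(\lambda)=\cpl_\lambda$, whose annihilator gives a single-column $\nu$-antitableau of length $n_1$ (strictly decreasing by the good-range assumption) and whose asymptotic support is the zero orbit, recorded by a single column with the prescribed pluses and minuses.

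For the inductive step I would verify the two halves separately. First, the signed-tableau half: by the description of $K$-orbits on the nilpotent cone in $\fs^*$ via signed $(p,q)$ Young tableaux (CMc 9.3.3), the asymptotic support of a cohomologically induced module from a $\theta$-stable parabolic equals the $K$-saturation of $\fu\cap\fs$ plus the asymptotic support of the inducing module, followed by the standard closure/collapse procedure. One checks that this geometric operation, restricted to the one-block extension $\fq_{r-1}\leadsto\fq$, produces exactly the combinatorial recipe of Definition \ref{sign.tab.for.fq}: the requirement that at most one sign be added to each row-end implements the Jordan block lengths going up by at most one under the $(p_r,q_r)$-perturbation, the alternation rule across rows reflects that columns carry at most one $+$ and one $-$ of each block, and the "skip a row, start new rows" provisions are exactly the rectification needed to land in a valid shape. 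Second, the antitableau half: Joseph's parameterization of primitive ideals in type $A$ transports the annihilator of $\mathcal{L}^\fg_{\fq'}$ of an irreducible to the Joseph-insertion of the new coordinates into the existing antitableau; in the good range the entries $\nu^{(r)}_1,\dots,\nu^{(r)}_{n_r}$ are each strictly smaller than every coordinate already present, so the insertion degenerates to filling the newly-created boxes of $S^{r-1}_\pm$ from top to bottom in order, which is the assertion of the theorem.

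The matching of shapes between the two halves is automatic once (i) and (ii) are verified independently, because both processes add exactly $n_r$ boxes to the same diagram at each step. The main obstacle is step (i): asymptotic support is not in general an exact functor of cohomological induction, and identifying the combinatorial collapse with the geometric closure requires care. The cleanest route is to reduce, via Vogan's and Barbasch--Vogan's results on $A_\fq(\lambda)$ for $U(p,q)$, to checking the rule on enough test cases (e.g.\ discrete series, or $\lambda=0$ where $A_\fq$ is a Speh-type module with known associated variety), after which standard deformation through the good range and continuity of asymptotic support finish the argument. For (ii), the main point is to verify that the good-range separation of block coordinates guarantees that no "bumping" occurs in Joseph's algorithm; this reduces to an elementary comparison of the inequality $\nu^{(i)}_{n_i}\geqslant\nu^{(j)}_1$ from Lemma \ref{RangeOfPositivityLambda} against the column-strict decrease required in Definition \ref{def.anti-tableau}.
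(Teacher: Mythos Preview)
This theorem is not proved in the paper under review: it is quoted verbatim from Trapa's 2001 paper (the citation \cite[Theorem~6.4]{Trapa2001} appears in the theorem header), and no argument is given here. The only related contribution of the present paper is the corollary following Theorem~\ref{main.result}, which extends the conclusion from the good range to the nice range; that extension is proved not by induction in stages but by showing directly, via the overlap formula and Lemma~\ref{k-m+j.lemma.body}, that the quasitableau produced by the algorithm already satisfies the antitableau inequalities when $\lambda$ is nice. So there is no ``paper's own proof'' of the stated theorem to compare your proposal against.

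As for your sketch on its own terms: the induction-in-stages framework is the right skeleton and matches the strategy Trapa uses. Two points deserve flagging. First, your description of the signed-tableau step conflates asymptotic support with associated variety; for $U(p,q)$ the relevant invariant attached to $A_\fq(\lambda)$ is the (dense orbit in the) associated variety, and the statement that cohomological induction sends it to the $K$-saturation of $(\fu\cap\fs)+\AV(\text{inducing module})$ is exactly what needs to be proved---it is not a formal consequence of general theory and is in fact the substance of Trapa's argument (see his Lemma~5.6 and the surrounding discussion). Reducing to ``enough test cases'' and invoking ``continuity of asymptotic support'' is not a valid shortcut here, since asymptotic support can jump under deformation. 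Second, on the annihilator side, the mechanism is not literally Joseph insertion of the new coordinates into the old tableau; rather, one identifies the annihilator via translation to a regular integral infinitesimal character and applies the Robinson--Schensted picture for primitive ideals. Your observation that the good-range inequality $\nu^{(i)}_{n_i}>\nu^{(j)}_1$ makes the new block of coordinates strictly smaller than all previous ones is correct and is indeed the reason the algorithm degenerates to top-to-bottom filling, but the intermediate step linking this to Joseph's parameterization needs to be spelled out.
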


\begin{defi}\cite{Trapa2001}\label{def.skew.column}
	A skew diagram is any diagram obtained by removing a smaller Young diagram from a larger one that contains it. A \textbf{skew column} is a skew diagram whose shape consists of at most one box per row. A skew column in a $\nu$-quasitableau is called \textbf{difference-one} if its consecutive entries decrease by exactly one when moving down the column.
\end{defi}

\begin{defi}\label{def.partition}
	Let $\{S_1,\cdots,S_r\}$ be a set of disjoint skew columns of a Young diagram $S$, and suppose $S=\coprod S_i$. Then we say that the set $\{S_i\}$ \textbf{forms a partition of $S$} into skew columns if $S^j:=\coprod_{i\leqslant j}S_i$ is a Young diagram for each $j=1,2,\cdots,r$.
\end{defi}

\noindent\emph{Remarks.} (1) For an $A_\mathfrak{q}(\lambda)$ in the good range, Theorem \ref{AqLambdaToTAbleau} gives a $\nu$-antitableau. If $A_\mathfrak{q}(\lambda)$ is in the mediocre range, the same process can still give a $\nu$-quasitableau but not necessarily a $\nu$-antitableau.

(2) Theorem \ref{AqLambdaToTAbleau} attaches the $\nu$-antitableau a partition into difference-one skew columns, denoted as $S=\coprod S_i$. In fact, this partition is essentially given by Lemma \ref{sign.tab.for.fq} where each $S_{\pm,i}:=S^i_\pm \backslash S^{i-1}_\pm$ is a skew column. Since the $\nu$-quasitableau and the $(p,q)$-signed tableau have the same shape, the $\nu$-quasitableau naturally inherits the partition from the $(p,q)$-signed tableau. A direct corollary is that this partition into skew columns is completely determined by $\mathfrak q$.

\begin{defi}
	Let $A_\mathfrak{q}(\lambda)$ be in the mediocre range. Let $S$ be the $\nu$-quasitableau (which could be a $\nu$-antitableau) constructed in Theorem \ref{AqLambdaToTAbleau}. We call $S$ the $\nu$-quasitableau or $\nu$-antitableau \textbf{attached to $A_\mathfrak{q}(\lambda)$}.
	
	Let $S_i$ be the skew columns obtained in the $i$-th step in Theorem \ref{AqLambdaToTAbleau}. We call $S=\coprod S_i$ the \textbf{$\mathfrak{q}$-consistent partition}.
\end{defi}

\begin{example}\label{example.diff.one.skew.col}
Take the setting of Example \ref{example.sign.tab}. Let $\lambda=(0,0,0,2,2,2,2,4,4)$. Then $\lambda+\rho=(4,3,2,3,2,1,0,1,0)$. The $\nu$-quasitableau attached to this $A_\mathfrak{q}(\lambda)$ is the last tableau in the following picture. Although the $\lambda$ is not in the good range for $\mathfrak q$, the $\nu$-quasitableau is still a $\nu$-antitableau. The $\mathfrak q$-consistent partition is shown in the picture left to the $\nu$-antitableau.

If we take another $\lambda^\prime=(0,0,0,3,3,3,3,4,4)$, then we will not have a $\nu$-antitableau but only a $\nu$-quasitableau.

\begin{center}\begin{tikzpicture}		
\foreach \x in {0,-0.5,...,-2}
\draw (-1.5,\x) rectangle (-1,\x-0.5);
\foreach \x in {0,-0.5,-1}
\draw (-1,\x) rectangle (-0.5,\x-0.5);
\draw (-0.5,0) rectangle (0,-0.5);

\node at (2.75-4,-0.25) {$4$};
\node at (2.75-4,-0.75) {$3$};
\node at (2.75-4,-1.25) {$2$};

\foreach \x in {0,-0.5,...,-2}
\draw (0.5,\x) rectangle (1,\x-0.5);
\foreach \x in {0,-0.5,-1}
\draw (1,\x) rectangle (1.5,\x-0.5);
\draw (1.5,0) rectangle (2,-0.5);

\node at (1.25,-0.25) {$3$};
\node at (1.25,-0.75) {$2$};
\node at (0.75,-1.75) {$1$};
\node at (0.75,-2.25) {$0$};

\foreach \x in {0,-0.5,...,-2}
\draw (2.5,\x) rectangle (3,\x-0.5);
\foreach \x in {0,-0.5,-1}
\draw (3,\x) rectangle (3.5,\x-0.5);
\draw (3.5,0) rectangle (4,-0.5);

\node at (3.75,-0.25) {$1$};
\node at (3.25,-1.25) {$0$};

\foreach \x in {0,-0.5,...,-2}
		\draw (5.5,\x) rectangle (6,\x-0.5);
		\foreach \x in {0,-0.5,-1}
		\draw (6,\x) rectangle (6.5,\x-0.5);
		\draw (6.5,0) rectangle (7,-0.5);
		
		\node at (5.75,-0.25) {$4$}; \node at (6.25,-0.25) {$3$};\node at (6.75,-0.25) {$1$};
		\node at (5.75,-0.75) {$3$}; \node at (6.25,-0.75) {$2$};
		\node at (5.75,-1.25) {$2$}; \node at (6.25,-1.25) {$0$};
		\node at (5.75,-1.75) {$1$};
		\node at (5.75,-2.25) {$0$};

\end{tikzpicture}\end{center}
\end{example}

In \cite[section 6]{Trapa2001}, the ranges of positivity of Lemma \ref{RangeOfPositivityLambda} is translated to the level of tableaux.
\begin{defi}\label{tab.position}
Let $S$ be a $\nu$-quasitableau and $S=\coprod S_i$ be a partition of $S$ into difference-one skew columns. 
	\begin{enumerate}
		\item $S_i$ and $S_j$ are said to be in \textbf{(weakly) good} position if the smallest entry in $S_i$ is (weakly) greater than the largest entry in $S_j$.
	\item $S_i$ and $S_j$ are said to be in \textbf{nice} position if both the smallest entry in $S_i$ is greater than or equal to the smallest entry of $S_j$, and the largest entry in $S_i$ is greater than or equal to the largest entry in $S_j$.
		\item $S_i$ and $S_j$ are said to be in \textbf{(weakly) fair} position if the average of the entries in $S_i$ is (weakly) greater than the average of the entries in $S_j$.
	\item $S_i$ and $S_j$ are said to be in \textbf{mediocre} position if either one of the following conditions holds: the smallest entry in $S_i$ is greater than or equal to the smallest entry in $S_j$; or the largest entry in $S_i$ is greater than or equal to the largest entry of $S_j$.
	\end{enumerate}
The entire partition is called (weakly) good, nice, (weakly) fair, and mediocre if all pairs of its skew columns are in the specified position.
\end{defi}

\begin{lemma}\label{lem.ranges}
Let $S$ be the $\nu$-quasitableau attached to $A_\mathfrak{q}(\lambda)$, and $S=\coprod S_i$ be the $\mathfrak q$-consistent partition into difference-one skew columns. Then the partition is in (weakly) good, nice, (weakly) fair or mediocre position if and only if the $A_\mathfrak{q}(\lambda)$ is in the (weakly) good, nice, (weakly) fair or mediocre range, respectively.
\begin{proof}
	This is obvious by Definition \ref{tab.position} and \ref{Def-Nice}, and Lemma \ref{RangeOfPositivityLambda}.
\end{proof}
\end{lemma}

Let $S$ be the $\nu$-quasitableau attached to an $A_\mathfrak{q}(\lambda)$ in the mediocre range, and assume $S$ is not a $\nu$-antitableau. Then $S$ cannot represent the annihilator of the $A_\mathfrak{q}(\lambda)$ by Joseph's parameterization of primitive ideals of $\mathfrak{gl}(n,\cpl)$. Trapa fixed this issue in \cite{Trapa2001} by developing an algorithm on the $\nu$-quasitableaux. The notions of overlap and singularity for tableaux are then introduced with this algorithm. Here we will briefly state Trapa's result in Theorem \ref{Trapa.main.thm}, but skip the algorithm.

\begin{defi}\label{OverlapCD}\cite{Trapa2001}
	Let $S=\coprod S_i$ be a partition into skew columns. Given two adjacent skew columns $C=S_j$ and $D=S_{j+1}$, label the entries of $C$ and $D$ (moving sequentially down each skew column) as $c_1,\cdots,c_k$, and $d_1,\cdots,d_l$. For $1\leqslant m\leqslant \min\{k,l\}$, define
\begin{center}
	condition-$m$: $c_{k-m+i}$ is strictly left of $d_i$ in $S$, for $1\leqslant i\leqslant m$.
\end{center}
	Define the \textbf{overlap} of $C$ and $D$, denoted by $\overlap(C,D)$, to be the largest $m\leqslant\min\{k,l\}$ so that condition-$m$ holds. In particular, if condition-$m$ never holds, define $\overlap(C,D)$=0.
	
	Suppose $S$ is a $\nu$-quasitableau and each $S_i$ is a difference-one skew column. The \textbf{singularity} of $C$ and $D$, denoted as $\sing(C,D)$, is defined to be the number of pairs of identical entries among the $c_i$ and $d_j$.
\end{defi}

\begin{example}\label{example.singularity}
	The picture in Example \ref{example.diff.one.skew.col} gives a partition of a antitableau into difference-one skew columns. From left to right, we denote the skew columns as $S_1, S_2$ and $S_3$. Then we have
	$$
	\overlap(S_1,S_2)=2,~\overlap(S_2,S_3)=2;~\sing(S_1,S_2)=2,~\sing(S_2,S_3)=2.
	$$
\end{example}

\begin{thm}\label{Trapa.main.thm}
	Let $\fq$ be a $\theta$-stable parabolic and $\cpl_\lambda$ be a one-dimensional $(\fl,L\cap K)$-module in the mediocre range for $\fq$ and write $\nu=\lambda+\rho$. Let $S$ be the $\nu$-quasitableau attached to $A_\mathfrak{q}(\lambda)$ and let $S=\coprod S_i$ be the $\mathfrak{q}$-consistent difference-one partition, as in Theorem \ref{AqLambdaToTAbleau}. Then there is an algorithm (described in \cite{Trapa2001}) to locate a distinguished $S^\prime=\coprod S^\prime_i$ equivalent (in the sense of \cite[Definition 7.4]{Trapa2001}) to $S=\coprod S_i$ such that either $S^\prime=0$; or $S^\prime$ is a $\nu$-antitableau and $\coprod S^\prime_i$ is in the nice position with
	\begin{equation}\label{Sing<Overlap.org}
		\overlap(S^\prime_i, S^\prime_{i+1})\geqslant\sing(S^\prime_i, S^\prime_{i+1}), ~\forall ~i.
	\end{equation}
	The module $A_\fq(\lambda)$ is nonzero if and only if the latter case holds and in this case, the annihilator $\Ann(A_\fq(\lambda))=S^\prime$.
\end{thm}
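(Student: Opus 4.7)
The plan is to reduce the mediocre range to the good range via translation functors, and to read off the combinatorics box-by-box. Concretely, I would start from a regular weight $\lambda_0$ in the good range for $\fq$, built so that $\lambda_0+\rho$ differs from $\lambda+\rho$ by an integral dominant shift supported on the blocks of $\fl$. By Theorem \ref{AqLambdaToTAbleau}, the tableau pair of $A_\fq(\lambda_0)$ is known explicitly and is partitioned into difference-one skew columns whose entries are the weight coordinates within each $(p_i,q_i)$-block. The module $A_\fq(\lambda)$ should then be recovered from $A_\fq(\lambda_0)$ by a translation functor $\psi$ to the (possibly singular) infinitesimal character of $\lambda+\rho$; the standard compatibility of translation with cohomological induction guarantees $\psi A_\fq(\lambda_0)\cong A_\fq(\lambda)$.

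Next I would analyze how $\psi$ acts on the tableau parameters. Each wall that $\lambda_0+\rho$ crosses to reach $\lambda+\rho$ corresponds to identifying two entries in $S$, and the number of such identifications among adjacent skew columns $S_j, S_{j+1}$ is exactly $\sing(S_j, S_{j+1})$. The problem is that after these identifications, $S$ may fail to be a $\nu$-antitableau (rows may fail to weakly decrease or columns may fail to strictly decrease). I would describe the algorithm as a sliding procedure on $S$: at each singular coincidence, try to absorb the repeated entry from $S_{j+1}$ into the row of the matching entry in $S_j$ without breaking the antitableau conditions. The overlap $\overlap(S_j,S_{j+1})$ is, by inspection of its definition via the "strictly-left-of" condition, exactly the number of such absorptions that the geometry of the diagram permits. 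Hence, if at some stage $\sing > \overlap$, an unabsorbable repetition forces either a column violation or an equality in a strict position, and the only equivalent form $S'$ that removes the violation has a collapse yielding $S'=0$.

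The equivalence of this combinatorial vanishing with $A_\fq(\lambda)=0$ would then be extracted by matching the sliding operation to the action of $\psi$ on the corresponding cells in Joseph's parameterization of primitive ideals in $U(\mathfrak{gl}(p+q,\cpl))$ by standard tableaux. When the combinatorial sliding succeeds and yields a genuine antitableau $S'=\coprod S'_i$, Joseph's Duflo-type bijection together with the compatibility of translation with coherent continuation gives $\Ann(A_\fq(\lambda))=S'$; the nice-position claim for $\coprod S'_i$ follows because each successful absorption only strengthens the inequalities on both largest and smallest entries of adjacent columns. When sliding fails, the translation functor annihilates $A_\fq(\lambda_0)$, proving the dichotomy.

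The principal obstacle I expect is the faithfulness of the combinatorial reduction: showing that the "answer" $S'$, together with whether $S'=0$, is independent of the representative of the equivalence class one starts with, and that the inequality $\overlap \ge \sing$ is genuinely necessary and not just sufficient. This requires a careful bookkeeping lemma relating, for every pair of adjacent skew columns, all admissible row interchanges (the equivalence of Definition \ref{def.signed.tableau} lifted to signed-plus-entries tableaux) to the set of configurations in which translation preserves irreducibility. Establishing this uniqueness, and checking that no global interaction between non-adjacent skew columns can spoil the local overlap/singularity count, is the technical heart of the argument.
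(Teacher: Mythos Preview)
The paper does not prove this theorem: it is quoted from \cite{Trapa2001}, and the remark immediately following the statement explicitly says ``In this note, we do not need the detail of the algorithm and the definition of the equivalence of two partitions of a tableau. So, we omit the detail here. For readers that are interested, please check sections 6 and 7 of \cite{Trapa2001}.'' There is therefore no proof in the present paper to compare your proposal against.

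That said, your outline is in the spirit of Trapa's original argument: one does start from the good range (where Theorem~\ref{AqLambdaToTAbleau} applies), use translation functors to reach the singular infinitesimal character, and track the effect on the tableau data via coherent continuation and Joseph's primitive-ideal parametrization. Your identification of the main technical burden---independence of the outcome from the choice of representative, and showing that the local adjacent-column condition $\overlap\ge\sing$ is both necessary and sufficient with no interference from non-adjacent columns---is accurate; in \cite{Trapa2001} this is handled by the explicit equivalence of partitions (Definition~7.4 there) and a step-by-step reduction lemma. Where your sketch is thinnest is the claim that ``the only equivalent form $S'$ that removes the violation has a collapse yielding $S'=0$'': this is not a one-line observation, and in Trapa's treatment it requires the full machinery of the algorithm together with the irreducibility hypothesis built into the mediocre range. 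If you intend to reconstruct a proof, that step needs an actual argument rather than an assertion.
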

\noindent
\emph{Remark.} In the intermediate steps of the algorithm mentioned above, one will obtain a sequence of $\nu$-quasitableaux, each of which should satisfy (\ref{Sing<Overlap.org}). The consequence of failing (\ref{Sing<Overlap.org}) is that the $A_\mathfrak{q}(\lambda)$ vanishes. Hence, this theorem sets up a criterion for an $A_\mathfrak{q}(\lambda)$ to be nonzero. In this note, we do not need the detail of the algorithm and the definition of the equivalence of two $\nu$-quasitableaux. For readers that are interested, please check sections 6 and 7 of \cite{Trapa2001}. What we will study further is the nonzero criterion (\ref{Sing<Overlap.org}).

\subsection{Dirac cohomology and Dirac index}

We fix a non-degenerate invariant symmetric bilinear form $B$ on $\fg$. Then $\fk$ and $\fs$ are orthogonal to each other under $B$. Fix an orthonormal basis $\{Z_1,\cdots,Z_{\dim \fs_0}\}$ of $\fs_0$ with respect to the inner product on $\fs_0$ induced by $B$. Let $U(\fg)$ be the universal enveloping algebra, and let $C(\fs)$ be the Clifford algebra. As introduced by Parthasarathy in \cite{Par1972}, the \textbf{Dirac operator} is defined as
\begin{equation*}
D:=\sum_{i=1}^{\dim\fs_0} Z_i\otimes Z_i\in U(\fg)\otimes C(\fs).
\end{equation*}

It is easy to check that $D$ does not depend on the choice of the orthonormal basis $\{Z_i\}$. Let $\spin_G$ be a spin module for the Clifford algebra $C(\fs)$. For any $(\fg,K)$-module $X$, the Dirac operator $D$ acts on $X\otimes \spin_G$, and the \textbf{Dirac cohomology} defined by Vogan is the following $\widetilde{K}$-module:
\begin{equation*}
	H_D(X):=\ker D/(\ker D\cap \im D).
\end{equation*}
Here $\widetilde K$ is the spin double cover of $K$. That is
\begin{equation*}
	\widetilde K:=\{(k,s)\in K\times \spin(\fs_0)|Ad(k)=p(s)\},
\end{equation*}
where $Ad:K\rightarrow SO(\fs_0)$ is the adjoint map, and $p:\spin(\fs_0)\rightarrow SO(\fs_0)$ is the universal covering map.

Let $\Delta(\fs,\ft)$ be the set of roots of $\fs$, and put
\begin{equation*}
	\Delta^+(\fs,\ft)=\Delta(\fs,\ft)\cap \Delta^+(\fg,\ft),\quad
	\Delta^-(\fs,\ft)=\Delta(\fs,\ft)\cap \Delta^-(\fg,\ft)
\end{equation*}
Denote $\rho_n$ to be the half sum of roots in $\Delta^+(\fs,\ft)$. We have the corresponding isotropic decomposition
\begin{center}
	$\fs=\fs^+\oplus\fs^-$, where $\fs^+=\sum_{\alpha\in\Delta^+(\fs,\ft)}\fg_\alpha$, and $\fs^-=\sum_{\alpha\in\Delta^-(\fs,\ft)}\fg_\alpha$.
\end{center}
Then
\begin{equation*}
	\spin_G=\bigwedge\fs^+\otimes \cpl_{-\rho_n}.
\end{equation*}
The spin module decomposes into two parts as
\begin{equation*}
	\spin^+_G:=\bigwedge^{\rm even}\fs^+\otimes \cpl_{-\rho_n};\quad \spin^-_G:=\bigwedge^{\rm odd}\fs^+\otimes \cpl_{-\rho_n}.
\end{equation*}
Let $X$ be any $(\fg,K)$-module, the Dirac operator $D$ interchanges $X\otimes \spin_G^+$ and $X\otimes \spin_G^-$. Thus the Dirac cohomology $H_D(X)$ decomposes into the even part and the odd part, which will be denoted by $H^+_D(X)$ and $H^-_D(X)$ respectively. The Dirac index is defined as
\begin{equation*}
\DI(X):=H^+_D(X)-H^-_D(X),
\end{equation*}
which is a virtual $\widetilde K$-module. It is obvious that if $\DI(X)$ is nonzero, then $H_D(X)$ is nonzero. However, the converse is not true.

\section{A formula for the overlap}

\subsection{New notations for tableaux}

In this section, we have to frequently describe the relative positions of the entries of signed tableaux. To avoid redundant descriptions, we introduce some new notations.

In general, we use a pair of square brackets $[\cdot]$ to represent an entry in a tableau. Let $S=\coprod S_i$ be a partition, and we assume all symbols like $S_i$ with subscripts means the $i$-th skew columns from a tableau $S$. Let $S^k=\coprod_{i\leqslant k}S_i$ be the union of its first $k$ skew columns. Note that $S^k$ is also a Young diagram. We write $[t]^{(j)}$ as the $t$-th entry, counting from top to bottom, of the $j$-th skew column $S_j$. Note that this entry may not be located in the $t$-th row of $S$ due to possible skips in the arrangement of $S_j$. Let $\hat S_\pm$ be a representative of a signed tableaux $S_\pm$. If the sign of $[t]^{(j)}$ is known, we may append the sign to it as a subscript. For example, $[t]^{(j)}_-$ is a negative entry. When only the sign information is needed, we may simplify the notation to $[+]^{(j)}$ and $[-]^{(j)}$, or just $[+]$ or $[-]$.

If an entry $[t_1]^{(j_1)}$ is below (or in the same row of) another entry $[t_2]^{(j_2)}$, we write $[t_1]^{(j_1)}>^\updownarrow [t_2]^{(j_2)}$ (or $[t_1]^{(j_1)}\geqslant^\updownarrow [t_2]^{(j_2)}$). If an entry $[t_1]^{(j_1)}$ is right to (or in the same vertical column of) another entry $[t_2]^{(j_2)}$, we say $[t_1]^{(j_1)}>^\leftrightarrow [t_2]^{(j_2)}$ (or $[t_1]^{(j_1)}\geqslant^\leftrightarrow [t_2]^{(j_2)}$). We may reverse the direction of these inequalities to indicate the opposite situations. In this setting, $(S,\geqslant^\updownarrow)$ is an ordered set with respect to the vertical position, and $(S_\pm,\geqslant^\leftrightarrow)$ is an ordered set with respect to the horizontal position.

In a representative $\hat S_\pm$ of a signed tableau, we say $[t]^{(j)}$ is \textbf{paired up} with $[s]^{(i)}$, denoted by $[t]^{(j)}\sim [s]^{(i)}$, if they are horizontally adjacent. We define some finite sets of (pairs of) entries as follows:
\begin{center}
	\begin{tabular}{ll} 
		$[\bullet]^{(i)}\sim [\bullet]^{(i+1)}$ & the set of pairs of entries from $\hat S_{\pm,i}$ and $\hat S_{\pm,i+1}$ that are paired up;\\
		$[\times]\sim [\bullet]^{(i+1)}$ & the set of entries from $\hat S_{\pm,i+1}$ that are not paired up with\\ & entries from $\hat S_{\pm,i}$;\\
		$[\bullet]^{(i)}\sim[\times]$ & the set of entries from $\hat S_{\pm,i}$ that are not paired up in the \\ & arrangement of $\hat S_{\pm,i+1}$;\\
		$[\bullet]^{(l)}<^\updownarrow[x]$ & the set of entries from $\hat S_{\pm,l}$ that are strictly above $[x]$;\\
		$[\bullet]^{(i)}_+$ (resp. $[\bullet]^{(i)}_-$) & the set of all $[+]$ (resp. $[-]$) in $\hat S^{(i)}$;
	\end{tabular}
\end{center}
In practice, we may combine or slightly adjust these notations to describe sets of entries, and one can quickly understand the meaning based on the table above. For example, $[\bullet]^{(l)}_+\leqslant^\updownarrow[x]$ means the set of positive entries from $\hat S_{\pm,l}$ that are above or in the same row as $[x]$. We will use $\#$ to denote the cardinality of a finite set. These newly defined notations can greatly reduce wordy description of tableaux. As the first application, we restate the condition-$m$ for adjacent skew columns $S_j$ and $S_{j+1}$ in Definition \ref{OverlapCD}:

\begin{center}
	condition-$m$: $[k-m+i]^{(j)}<^\leftrightarrow[i]^{(j+1)}$ in $S$, $\forall~1\leqslant i\leqslant m$,
\end{center}
where $k$ is the length of $S_j$.

\subsection{Proof of the overlap formula}

Given a $\theta$-stable parabolic subalgebra $\mathfrak q$, we prove a formula for the overlaps of skew columns from the $\mathfrak q$-consistent partition of the signed tableau $S_\pm$.

\begin{prop}\label{sign.tab.first.prop}
	Let $\fq$ be a $\theta$-stable parabolic subalgebra attached to $\{(p_1,q_1),\cdots,(p_r,q_r)\}$. Let $S_\pm$ be a signed tableau with a partition $S_\pm=\coprod S_{\pm,i}$. Then this partition is $\mathfrak q$-consistent if and only if all representatives $\hat S_\pm$ of $S_\pm$ with the induced partition $\hat S_\pm=\coprod \hat S_{\pm,i}$ satisfy the following two properties for all $j$:\\
	(1) each skew column $\hat S_{\pm,j}$ has $p_j$ $[+]$ and $q_j$ $[-]$;\\
	(2) if a row is skipped in the arrangement of $\hat S_{\pm,j}$ in $\hat S_\pm$, then the rows of $\hat S_{\pm}^j$, starting from the first skipped row, end with the same sign until the last row of $\hat S_{\pm,j}$.
\end{prop}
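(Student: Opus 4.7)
The plan is to prove both implications by induction on the length $r$ of the sequence $\{(p_1,q_1),\ldots,(p_r,q_r)\}$ attached to $\fq$. The base case $r=1$ is immediate: $\hat S_\pm$ is a single skew column with $p_1$ pluses and $q_1$ minuses, the partition has only $\hat S_{\pm,1}=\hat S_\pm$, no rows are skipped, and property (2) is vacuous.

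For the $(\Rightarrow)$ direction, I would take as the desired representative the one produced by the algorithm of Definition \ref{sign.tab.for.fq}, with its natural partition in which $\hat S_{\pm,j}$ is the set of signs placed at step $j$. Property (1) is built into the algorithm. For property (2) at step $j$, I would analyze the first skip: it occurs at a row $r^*$ precisely because one sign type, say $[-]$, has just been exhausted. Going down from $r^*$ to the last row of $\hat S_{\pm,j}$, any row whose end is $[+]$ is skipped and keeps end $[+]$, while any row whose end is $[-]$ receives the opposite sign $[+]$ (still available by the choice of rep) and now ends in $[+]$. Hence every row in this range ends in $[+]$. The flexibility of Rule (6) in Definition \ref{sign.tab.for.fq} is what allows us to pick a rep of $\hat S^{j-1}_\pm$ for which the other sign does not run out before $\hat S_{\pm,j}$ is complete, so the "same-sign" conclusion survives.

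For $(\Leftarrow)$, given $\hat S_\pm$ with a partition satisfying (1) and (2), I would inductively show that running the algorithm with the reps $\hat S^{j-1}_\pm$ inherited from $\hat S_\pm$ reproduces $\hat S_{\pm,j}$ at every step. On each row $k$ that receives an entry from $\hat S_{\pm,j}$, the entry is forced to be the opposite of the end-sign of row $k$ in $\hat S^{j-1}_\pm$ because the signs in each row of a signed tableau alternate, which is exactly what the algorithm's alternation rule would add. On each row $k$ inside the range of $\hat S_{\pm,j}$ that receives no entry, property (2) pins down the end-sign, and a tally using property (1) shows the opposite sign has already been consumed by the time the algorithm reaches $k$, so the algorithm skips $k$ as required. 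Rows below the last row of $\hat S_{\pm,j}$ either start as new rows from Rule (5) or are not processed, consistent with the partition.

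The main obstacle I expect is the case analysis in $(\Rightarrow)$ when both sign types risk exhausting during one step: one must invoke Rule (6) carefully to pick a rep in which the second exhaustion occurs no earlier than the last entry of $\hat S_{\pm,j}$, so that the homogeneity of end-signs in the skipped range is preserved. The comparable technicality in $(\Leftarrow)$ is matching the algorithm's running sign counts row-by-row against the partition's placements; this is a bookkeeping argument that leans on (1) to supply the correct totals and on (2) to identify exactly where the skips must fall.
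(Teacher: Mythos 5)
Your proposal is correct and follows essentially the same route as the paper: the forward direction analyzes the algorithm's skips (a skip occurs exactly when one sign is exhausted, after which only the other sign is placed, forcing the uniform end-sign of property (2)), and the backward direction re-runs the algorithm on the given partition, using alternation of signs within rows to force the placements and property (2) to show the skips coincide with sign exhaustion. Your extra attention to Rule (6) and to simultaneous exhaustion of both signs is slightly more careful than the paper's treatment, but it is the same argument.
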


Example \ref{example.sign.tab} fits in this proposition very well. In the second step, row 3 is skipped; and we see until row 5 all rows end with $[+]$. In the last step, row 2 is skipped; and we see both the second row and the third row end with $[-]$.

\begin{proof}[Proof of Proposition \ref{sign.tab.first.prop}]
Suppose $S_\pm=\coprod S_i$ is a $\mathfrak q$-consistent partition. Take a representative $\hat S_\pm$ and let $\hat S_\pm=\coprod \hat S_i$ be the induced partition. Then rule (1) of Lemma \ref{sign.tab.for.fq} implies property (1). Rule (4) of Lemma \ref{sign.tab.for.fq} causes skips, and this rule is applied when one of the signs run out. Hence, part (2) of the property holds.

Suppose $S_\pm=\coprod S_{\pm,j}$ is a partition satisfying the properties. Let $\hat S_\pm$ be a representative and let $\hat S_\pm=\coprod \hat S_i$ be the induced partition. We shall prove that each skew column in this partition can be constructed following the five rules of Lemma \ref{sign.tab.for.fq}. There is no problem for the first column. For any $j>1$, suppose $\coprod_{i<j}\hat S_{\pm,i}$ is done. Consider the $j$-th skew column. Rules (1) and (4) hold because $\hat S_{\pm,j}$ is a skew-column and has $p_j$ $[+]$ and $q_j$ $[-]$. Rule (2) holds due to the definition of signed tableaux. If there is no skip, then there is nothing to prove. Suppose $[x]$ is the first skipped entry in $\hat S^{(j-1)}_\pm$. We may assume $[x]$ is negative. By property (2), all entries below $[x]$ are negative. Hence $[x]$ is the place where $[+]^{(j)}$ runs out. Rule (3) is proved. No need to check rule (5) because each $\hat S^j_\pm$ is a Young diagram.
\end{proof}

\begin{lemma}\label{Observations}
	Let $\fq$ be a $\theta$-stable parabolic subalgebra attached to $\{(p_1,q_1),\cdots,(p_r,q_r)\}$. Let $S_\pm$ be a signed tableau with a $\mathfrak q$-consistent partition $S_\pm=\coprod S_{\pm,i}$. Let $\hat S_\pm$ be a representative and let $\hat S_\pm=\coprod \hat S_{\pm,i}$ be the induced partition. Here are some observations for entries of $\hat S_\pm$.
	\begin{enumerate}
\item\label{ls1} For a fixed $i$, $t_1>t_2$ implies $[t_1]^{(i)}>^\updownarrow [t_2]^{(i)}$.
\item\label{ls4} Suppose $[t_1]$ and $[t_2]$ are the last entries of two different rows of $S^j$ for certain $j$. Then $[t_1]>^\updownarrow [t_2]$ implies $[t_1]\leqslant^\leftrightarrow [t_2]$.
\item\label{ls5} For a fixed $i$, $[s]^{(i)}\geqslant^\updownarrow [t]^{(i+1)}$ implies $[s]^{(i)}<^\leftrightarrow [t]^{(i+1)}$.
\item\label{ls6} If a $[+]$ is skipped in the arrangement of $\hat S_{\pm,j}$, then all $[-]^{(j)}$ must be strictly above this $[+]$. If a $[-]$ is skipped in the arrangement of $\hat S_{\pm,j}$, then all $[+]^{(j)}$ must be strictly above this $[-]$.
\item\label{ls7} If multiple entries $[x_k]$ are skipped in the arrangement of $\hat S_{\pm,j}$, then all $[x_k]$ have the same sign.
	\end{enumerate}
\begin{proof}
	(\ref{ls1}) is trivial. 
	
 Because the rows of a tableau have decreasing lengths, (\ref{ls4}) is true. The equal happens when the two rows have equal length.
	
Suppose $[t]^{(i+1)}$ is paired up with $[x]$ in $\hat S^{(i)}_\pm$, then $[x]\leqslant^\updownarrow [s]^{(i)}$, and $[x]<^\leftrightarrow [t]^{(i+1)}$. Notice both $[x]$ and $[s]^i$ are the row-ends of $\hat S^i_\pm$. $[x]\geqslant^\leftrightarrow[s]^{(i)}$. Hence, $[s]^{(i)}\leqslant^\leftrightarrow [x]<^\leftrightarrow [t]^{(i+1)}$. (3) is proved.
	
	(\ref{ls6}) and (\ref{ls7}) are due to Proposition \ref{sign.tab.first.prop}.
\end{proof}
\end{lemma}

The proof of the following lemma is completely combinatorial. For readers who are not interested in the detail discussion about Young diagrams, please jump to the next theorem.

\begin{lemma}\label{k-m+j.lemma.body}
Let $\fq$ be a $\theta$-stable parabolic subalgebra attached to $\{(p_1,q_1),\cdots,(p_r,q_r)\}$. Let $S_\pm$ be a signed tableau with a $\mathfrak q$-consistent partition $S_\pm=\coprod S_{\pm,i}$. Denote $m_i=\min\{p_i,q_{i+1}\}+\min\{p_{i+1},q_i\}$ and $n_i=p_i+q_i$. Then
\begin{equation}\label{k-m+j.lemma}
	[n_i-m_i+j]^{(i)}\geqslant^\updownarrow [j]^{(i+1)},~\forall~j\leqslant m_i.
\end{equation}
\begin{proof}
Fix a representative $\hat S_\pm$ and let $\hat S_\pm=\coprod \hat S_{\pm,i}$ be the induced partition.\\

\noindent\emph{Part I: The case of $p_i\geqslant q_{i+1}$ and $q_i\geqslant p_{i+1}$.} In this case, $m_i=p_{i+1}+q_{i+1}=n_{i+1}$. Define a number $s=\#([\bullet]^{(i)}<^\updownarrow [j]^{(i+1)})$. We claim $n_i-m_j+j-s> 0$. Suppose the claim is true. Then by Lemma \ref{Observations} (\ref{ls1}), we have $[n_i-m_j+j]^{(i)}>^\updownarrow[s]^{(i)}$. The definition of $s$ indicates that $[n_i-m_j+j]^{(i)}\geqslant^\updownarrow [j]^{(i+1)}$. The rest of Part I is the proof of the claim $n_i-m_j+j-s> 0$.

We compute $n_i-m_i$ and $j-s$ separately.
\begin{equation*}
	n_i-m_i=n_i-n_{i+1}=\#\big( [\bullet]^{(i)}\sim[\times] \big)-\#\big( [\times]\sim [\bullet]^{(i+1)}\big).
\end{equation*}
And,
\begin{align*}
	j-s-1
	=&\#\Big( ([\times] \sim[\bullet]^{(i+1)})\cap ([\bullet]^{(i+1)}<^\updownarrow [j]^{(i+1)}) \Big)\\
	&-\#\Big(
	([\bullet]^{(i)}\sim [\times]) \cap
	([\bullet]^{(i)}\leqslant^\updownarrow [s]^{(i)})
	\Big).
\end{align*}
Then we have
\begin{align*}
	n_i-m_i+j-s=&1+\#\big(
	([\bullet]^{(i)}\sim[\times])\cap ([\bullet]^{(i)}>^\updownarrow [s]^{(i)})
	\big)\\ \tag{$\ast$}
	&-\#\big(
	([\times]\sim[\bullet]^{(i+1)})\cap ([\bullet]^{(i+1)}\geqslant^\updownarrow[j]^{(i+1)})
	\big)\\
	=&\text{(denoted by)}~1+z_1-z_2.
\end{align*}
If $z_2=0$, then the proof is done. Assume the contrary and let
$$[d]^{(i+1)}\in ([\times]\sim[\bullet]^{(i+1)})\cap ([\bullet]^{(i+1)}\geqslant^\updownarrow[j]^{(i+1)}).$$
Notice that $[d]^{(i+1)}$ cannot be the first entry of a row due to the assumption that $p_i+q_i\geqslant p_{i+1}+q_{i+1}$. In other words, $\hat S_{\pm,i+1}$ is too short to start a new row. Thus $[d]^{(i+1)}$ must be paired up with an entry.

Without loss of generality, we may assume $[d]^{(i+1)}$ is paired up with a positive entry $[x]_+$ and hence $[d]^{(i+1)}\in [\bullet]^{(i+1)}_-$. We now prove by hypothesis that
\begin{equation*}
	\big([\bullet]^{(i)}\sim [\times]\big) \cap \big([\bullet]_+^{(i)}<^\updownarrow [d]^{(i+1)}_-\big)=\emptyset. \tag{$\ast1$}
\end{equation*}
Assume the contrary, and let $[c ]^{(i)}_+ \in \big([\bullet]_+^{(i)}<^\updownarrow [d]^{(i+1)}_-\big)$. There is a skip at $[c]^{(i)}_+$ in the arrangement of $\hat S_{\pm,i+1}$. By Proposition \ref{sign.tab.first.prop}, all entries of $\hat S_{\pm,i+1}$ below $[c]^{(i)}_+$ should be positive. This is contrary to the fact that $[d]^{(i+1)}\in [\bullet]^{(i+1)}_-$. Thus $(\ast1)$ is correct.

This $(\ast1)$ implies
\begin{equation*}
	\#\Big(([\bullet]^{(i)}\sim [\times]) \cap ([\bullet]^{(i)}>^\updownarrow [d]^{(i+1)}_-)\Big)\geqslant \#\big([\bullet]_+^{(i)}\sim [\times]\big). \tag{$\ast2$}
\end{equation*}
Because $[d]^{(i+1)}\geqslant^\updownarrow [j]^{(i+1)}\geqslant^\updownarrow [s]^{(i)}$, we have that $z_1$ is greater than the left side of $(\ast2)$. Hence
\begin{equation*}
	z_1\geqslant\#([\bullet]_+^{(i)}\sim [\times]).
	\tag{$\ast2^\prime$}
\end{equation*}
The right side of $(\ast2^\prime)$ is
$$
	\#([\bullet]_+^{(i)}\sim [\times])=p_i-q_{i+1}+\#([\times]\sim[\bullet]_-^{(i+1)}).
$$
Noting that the length of $\hat S^{i+1}_\pm$ is less than $\hat S^i_\pm$, we know all entries in $([\times]\sim[\bullet]^{(i+1)})$ cause skips. By Lemma \ref{Observations} (\ref{ls7}), they should all have the same sign. Our early assumption indicates that one of these entries, which is $[d]^{(i+1)}$, has negative sign. Thus
$$
\big([\times]\sim[\bullet]^{(i+1)}\big)\subset [\bullet]^{(i+1)}_-
$$
Continue on $(\ast2^\prime)$,
\begin{align*}
	z_1&\geqslant\#([\bullet]_+^{(i)}\sim [\times])\\
	&=p_1-q_{i+1}+\#([\times]\sim[\bullet]_-^{(i+1)})\\
	&=p_i-q_{i+1}+\#([\times]\sim [\bullet]^{(i+1)})\\
	&\geqslant p_i-q_{i+1}+z_2\geqslant z_2.
\end{align*}
In the last line, the first inequality is due to the definition of $z_2$, and the second inequality is due to the assumption $p_i\geqslant q_{i+1}$ at the beginning of Part I. Finally, we have proved $n_i-m_i+j-s=1+z_1-z_2>0$.\\

\noindent\emph{Part II: The case of $p_i\leqslant q_{i+1}$ and $q_i\leqslant p_{i+1}$.}
Our proof starts with the entry $[j]^{(i)}$ instead of $[j]^{(i+1)}$. This entry exists because $\hat S_{\pm,i+1}$ has more entries than $\hat S_{\pm,i}$. The proof is divided into two parts by assuming whether $[j]^{(i)}$ is paired up with an entry of $\hat S_{\pm,i+1}$ or not.

Suppose $[j]^{(i)}$ is paired up with an entry $[s]^{(i+1)}$ of $\hat S_{\pm,i+1}$. We claim that no entry above $[j]^{(i)}$ is skipped in the arrangement of $\hat S_{\pm,i+1}$.

Assume the claim fails, and we may assume the skipped entry $[x_1]^{(i)}$ is positive. By Lemma \ref{Observations} (\ref{ls6}, all $[\bullet]_-^{(i+1)}$ must be above this $[x_1]^{(i)}_+$. Notice that $q_{i+1}\geqslant p_i$, which means $[\bullet]_+^{(i)}$ is not enough to get all $[\bullet]_-^{(i+1)}$ paired up. Thus at least one of $[\bullet]^{(i+1)}_-$ is paired up with an entry from $\hat S_\pm^{i-1}$. Hence, an $[x_2]_+$ in $\hat S_\pm^{i-1}$ is skipped in the arrangement of $\hat S_{\pm,i}$. Then by Proposition \ref{sign.tab.first.prop}, all entries of $\hat S_{\pm,i}$ below $[x]_+$ are positive, which implies $[j]^{(i)}\in [\bullet]^{(i)}_+$. Now we a contradiction: $[s]^{(i+1)}\in [\bullet]^{(i+1)}_-$ but meanwhile all $[\bullet]_-^{(i+1)}$ should be above $[x_1]_+^{(i)}$ which is even above $[j]^{(i)}$. Thus the claim holds. 

The claim is equivalent to say that all entries above (and including) $[j]^{(i)}$ are paired up with entries from $\hat S_{\pm,i+1}$. A direct corollary is that $s\geqslant j$. Thus
$$[n_i-m_i+j]^{(i)}= [j]^{(i)}=^\updownarrow [s]^{(i+1)}\geqslant^\updownarrow[j]^{(i+1)} .$$

Suppose $[j]^{(i)}$ is not paired up with any entry from $\hat S_{\pm,i+1}$. A trivial situation is that $[j]^{(i)}$ is not skipped in the arrangement of $\hat S_{\pm,i+1}$. This means all $\hat S_{\pm,i+1}$ are above $[j]^{(i)}$. Then $[j]^{(i)}<^\updownarrow [j]^{(i+1)}$ is naturally true.

Now, we assume $[j]^{(i)}$ is skipped in the arrangement of $\hat S_{\pm,i+1}$. Without loss of generality, we may also assume $[j]^{(i)}$ is positive. We put $j=x_1$ as in the proof of the claim in (iii). The proof shows that all $[\bullet]_-^{(i+1)}$ are above $[j]^{(i)}$. Moreover, by Lemma \ref{Observations} (\ref{ls6}), all $[\bullet]_-^{(i)}$ are above $[x_2]_+$ which is above $[j]_+^{(i)}$. As a conclusion, all $[\bullet]_-^{(i)}$ and $[\bullet]_-^{(i+1)}$ are above $[j]_+^{(i)}$. By Lemma \ref{Observations} (\ref{ls7}), those $[\bullet]_-^{(i)}$ cannot be skipped. Then those positive entries paired up with $[\bullet]_-^{(i)}$ are again above $[j]_+^{(i)}$. In total,
$$
\#\big([\bullet]^{(i+1)}<^\updownarrow [j]_+^{(i)}\big)\geqslant q_i+q_{i+1}\geqslant q_i+p_i\geqslant j.
$$
Therefore, $$[n_i-m_i+j]^{(i)}= [j]^{(i)}>^\updownarrow [j]^{(i+1)}.$$

\noindent\emph{Part III: The case of $p_i<q_{i+1}$ and $q_i>p_{i+1}$.}

We first prove a fact that at least one entry of $[\bullet]^{(i)}_-$ is skipped in the arrangement of $\hat S_{\pm,i+1}$. Because $q_i>p_{i+1}$, at least one negative entry $[y_1]^{(i)}\in([\bullet]^{(i)}\sim [\times])$. For the same reason, at least one negative entry $[y_2]^{(i+1)}\in([\times] \sim [\bullet]^{(i+1)})$. Either the entry $[y_2]_-^{(i+1)}$ is the first entry of a row or it is paired up (on their left side) with an entry of $\hat S^{(i-1)}_\pm$. In the former case, $[y_2]_-^{(i+1)}>^\updownarrow [y_1]_-^{(i)}$ because $[y_2]_-^{(i+1)}$ is arranged after all $[\bullet]^{(i)}$. In the latter case, we prove $[y_2]_-^{(i+1)}>^\updownarrow [y_1]_-^{(i)}$ by hypothesis. Assume the contrary. Then in the arrangement of $\hat S_{\pm,i}$ there is a skip over $[+]$ which is the entry paired up with $[y_2]_-^{(i+1)}$. As a consequence, $[y_1]_-^{(i)}$ cannot have negative sign by Proposition \ref{sign.tab.first.prop}. Therefore, we have a contradiction. As a conclusion, $[y_2]_-^{(i+1)}>^\updownarrow [y_1]_-^{(i)}$. Then there is a skip over $[y_1]_-^{(i)}$.

Now we go back to (\ref{k-m+j.lemma}). A trivial case is that no skip happens in the arrangement of $\hat S_{\pm,i+1}$ above $[j]^{(i+1)}$. Thus $[j]^{(i+1)}$ is actually in the $j$-th row. Meanwhile, $n_i-m_i+j=q_i-p_{i+1}+j>j$, which means $[k-m+j]^{(i)}$ must be below the $j$-th row. Hence, $[k-m+j]^{(i)}>^\updownarrow[j]^{(i+1)}$.

Suppose there exists an entry $[n_0]^{(i)}<^\updownarrow [j]^{(i+1)}$ such that $[n_0]^{(i)}$ is skipped in the arrangement of $\hat S_{\pm,i+1}$. We already have an entry $[y_1]_-^{(i)}$ skipped. By Lemma \ref{Observations} (\ref{ls7}), we know $[n_0]^{(i)}$ is negative. We claim that
\begin{equation*}
	([\times]  \sim[\bullet]^{(i+1)})\cap ([\bullet]_+^{(i+1)}<^\updownarrow [j]^{(i+1)})=\emptyset.\tag{$\ast3$}
\end{equation*}
We prove the claim by hypothesis. Assume the contrary, and let $[w_1]_-$ be the entry paired up with $[w_2]^{(i+1)}_+$ which is above $[j]^{(i+1)}$. By Lemma \ref{Observations} (\ref{ls6}) and the assumption on $[n_0]^{(i)}_-$, $[w_2]^{(i+1)}_+<^\updownarrow [n_0]^{(i)}_-<^\updownarrow [j]^{(i+1)}$. Thus, there is a skip over $[w_1]_-$ in the arrangement of $\hat S_{\pm,i}$. Then by Lemma \ref{Observations} (\ref{ls6}), $[\bullet]^{(i)}_+\subset ([\bullet]^{(i)}<^\updownarrow [w_1]_-)$. At this point, we have proved that all $[\bullet]_+^{(i)}$ and $[\bullet]_+^{(i+1)}$ are above $[j]^{(i+1)}$. Therefore, $j>p_i+p_{i+1}$ which is contradict to the fact $j\leqslant m_i=p_i+p_{i+1}$. The claim is proved.

Define a number $s=\#([\bullet]^{(i)}<^\updownarrow [j]^{(i+1)})$. Then $[n_i-m_i+j]\geqslant^\updownarrow[j]^{(i+1)}$ is equivalent to $n_i-m_j+j-s> 0$. We compute the difference $n_i-m_i$ and $j-s$ separately.
\begin{align*}
	j-s-1
	=&\#\Big( ([\times] \sim[\bullet]^{(i+1)})\cap ([\bullet]^{(i+1)}<^\updownarrow [j]^{(i+1)}) \Big)\\
	&-\#\Big(
	([\bullet]^{(i)}\sim [\times]) \cap
	([\bullet]^{(i)}<^\updownarrow [j]^{(i+1)})
	\Big)\\
	=&(\text{denoted by})~z^\prime_1-z^\prime_2.
\end{align*}
And because $q_i>p_{i+1}$, we have
\begin{equation*}
q_i-p_{i+1}=\#\big([\bullet]_-^{(i)}\sim[\times]\big)-\#\big([\times]\sim[\bullet]_+^{(i+1)}\big).
\end{equation*}
Notice there is a skip over $[n_0]_-^{(i)}$. By Lemma \ref{Observations} (\ref{ls6}), $[\bullet]_+^{(i+1)}$ are all above $[n_0]_-^{(i)}$ and hence above $[j]^{(i+1)}$. Combining this fact and $(\ast3)$, we obtain that $\#\big([\times]\sim[\bullet]_+^{(i+1)}\big)=0$. By Lemma \ref{Observations} (\ref{ls7}), we have
$$
\Big(
([\bullet]^{(i)}\sim [\times]) \cap
([\bullet]^{(i)}<^\updownarrow [j]^{(i+1)})
\Big)
\subset
\big([\bullet]_-^{(i)}\sim[\times]\big).
$$
Therefore,
$$q_i-p_{i+1}\geqslant z_2^\prime.$$
Now we compute
\begin{equation*}
	n_i-m_i+j-s=q_i-p_{i+1}+z_1^\prime-z_2^\prime+1> 0.
\end{equation*}
As a result, $[n_i-m_i+j]^{(i)}\geqslant^\updownarrow [j]^{(i+1)}$.\\

\noindent\emph{Part IV: The case of $p_i>q_{i+1}$ and $q_i<p_{i+1}$.} It is exactly the same as part III.
\end{proof}
\end{lemma}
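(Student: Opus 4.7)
The plan is to fix adjacent columns $\hat S_{\pm,i}$ and $\hat S_{\pm,i+1}$ and rephrase the target inequality $[n_i-m_i+j]^{(i)} \geqslant^\updownarrow [j]^{(i+1)}$ as a counting statement. If we set $s = \#([\bullet]^{(i)} <^\updownarrow [j]^{(i+1)})$, the inequality becomes $n_i - m_i + j - s \geqslant 1$, so it suffices to bound $s$ from above (equivalently, to bound the number of column-$i$ entries \emph{not} above $[j]^{(i+1)}$ from below). Because $m_i = \min\{p_i,q_{i+1}\} + \min\{p_{i+1},q_i\}$ behaves differently depending on the two sign inequalities, I split into the four cases $\{p_i\geqslant q_{i+1},\,q_i\geqslant p_{i+1}\}$, $\{p_i\leqslant q_{i+1},\,q_i\leqslant p_{i+1}\}$, $\{p_i<q_{i+1},\,q_i>p_{i+1}\}$, $\{p_i>q_{i+1},\,q_i<p_{i+1}\}$, and the four cases exhaust the possibilities. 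By the obvious symmetry exchanging $[+]\leftrightarrow[-]$, the last two are interchangeable, so I really only need three arguments.

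In the first case (column $i$ is at least as long as column $i+1$), I would decompose $n_i-m_i = \#([\bullet]^{(i)}\sim[\times]) - \#([\times]\sim[\bullet]^{(i+1)})$ and $j-s-1$ into analogous differences restricted to the entries above the cut-off. The target reduces to showing $z_1 \geqslant z_2$, where $z_1$ counts unpaired column-$i$ entries strictly below the position of the $s$-th entry of column $i$, and $z_2$ counts unpaired column-$i+1$ entries at or below $[j]^{(i+1)}$. I would pick a witness entry for $z_2$, say $[d]^{(i+1)}$, observe via Lemma 3.3(2),(7) that it must force a skip in the arrangement of $\hat S_{\pm,i+1}$ over an entry of $\hat S_{\pm,i}$ of the opposite sign, and then use the consistency condition (Definition 2.8) to deduce that \emph{all} unpaired column-$i$ entries of the same sign as the one paired with $[d]^{(i+1)}$ sit below $[d]^{(i+1)}$. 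Combined with the hypothesis $p_i \geqslant q_{i+1}$ (or the symmetric $q_i\geqslant p_{i+1}$), this yields the needed bound on $z_1$.

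In the second case (column $i$ is at most as long as column $i+1$), $n_i - m_i = 0$, so I must establish $[j]^{(i)} \geqslant^\updownarrow [j]^{(i+1)}$. I would split on whether $[j]^{(i)}$ is paired with an entry from column $i+1$: if it is, I claim no entry above $[j]^{(i)}$ is skipped in the arrangement of column $i+1$, because a skip over $[x_1]^{(i)}_+$ (say) would by Lemma 3.3(6) force every $[-]^{(i+1)}$ above it, while $q_{i+1}\geqslant p_i$ would require at least one $[-]^{(i+1)}$ to pair with an entry from $\hat S_\pm^{i-1}$, producing by Definition 2.8 a forced sign on $[j]^{(i)}$ that contradicts its pairing. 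If $[j]^{(i)}$ is unpaired, either all of column $i+1$ sits above it (done immediately) or $[j]^{(i)}$ is itself skipped, in which case iterating the same sign-propagation argument via Lemma 3.3(6),(7) forces all of $[\bullet]^{(i+1)}$ and all opposite-sign entries of $[\bullet]^{(i)}$ above $[j]^{(i)}$, giving the needed count. In the mixed case, a preliminary step establishes that at least one entry of the minority sign in column $i$ must be skipped; together with Lemma 3.3(6), this pins down where positive and negative entries of column $i+1$ can lie, and the bound $z_1' \geqslant z_2' - (q_i - p_{i+1})$ follows exactly as in case one.

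The main obstacle is case one (and its symmetric mixed variant), where the subtle part is \emph{not} the algebraic manipulation but identifying the correct witness entry and correctly threading the consistency condition through several Lemma 3.3 observations. The temptation is to argue only about signs globally, but one really needs to track the position of a specific unpaired $[d]^{(i+1)}$ and then use Definition 2.8 to transfer sign information between skips in the arrangements of $\hat S_{\pm,i}$ and $\hat S_{\pm,i+1}$. Once that bookkeeping is set up, each case becomes a short additive comparison.
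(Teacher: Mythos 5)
Your proposal follows essentially the same route as the paper's proof: the same reduction of $[n_i-m_i+j]^{(i)}\geqslant^\updownarrow[j]^{(i+1)}$ to the counting inequality $n_i-m_i+j-s\geqslant 1$, the same four-case split on the signs of $p_i-q_{i+1}$ and $q_i-p_{i+1}$, the same witness entry $[d]^{(i+1)}$ whose forced pairing and the consistency condition (Definition \ref{sign.tab.consis.condition}) yield the bound $z_1\geqslant z_2$ in the first case, and the same paired/unpaired dichotomy for $[j]^{(i)}$ in the second. The plan is correct and matches the paper's argument in structure and in all the key steps.
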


\begin{thm}\label{OverlapFormula}
Let $\mathfrak q$ be a $\theta$-stable parabolic subalgebra attached to $\{(p_1,q_1),\cdots,(p_r,q_r)\}$. Let $S=\coprod S_k$ be a $\mathfrak q$-consistent partition. Let $\hat S_\pm$ be a representative of $S_\pm$ and let $\hat S_\pm=\coprod \hat S_{\pm,k}$ be the induced partition. Then the overlap of two adjacent skew columns $S_i$ and $S_{i+1}$ is
	\begin{equation}
		\overlap(S_i,S_{i+1})
		=\min\{p_i,q_{i+1}\}+\min\{p_{i+1},q_i\}.
	\end{equation}
	\begin{proof}
Denote $m_i=\min\{p_i,q_{i+1}\}+\min\{p_{i+1},q_i\}$ and $n_i=p_i+q_i$ as in Lemma \ref{k-m+j.lemma.body}. We should prove two facts. One is that condition-$m_i$ holds, and the other one is that condition-$(m_i+1)$ fails. We have proved (\ref{k-m+j.lemma}), which is
$$
	[n_i-m_i+j]\geqslant^\updownarrow [j]^{(i+1)},~\forall~j\leqslant m_i.
$$
By Lemma \ref{Observations} (\ref{ls5}), we have
$$
	[n_i-m_i+j]<^\leftrightarrow [j]^{(i+1)},~\forall~j\leqslant m_i.
$$
Hence condition-$m_i$ holds.

In the case when $p_i\geqslant q_{i+1}$ and $q_i\geqslant p_{i+1}$, and the case when $p_i\leqslant q_{i+1}$ and $q_i\leqslant p_{i+1}$, the proof is done because $m_i$ equals the length of one of the skew columns. In the case when $q_i> p_{i+1}$ and $q_{i+1}>p_i$, we should continue to prove that condition-($m_i+1$) fails. Same for the case when 
$q_i< p_{i+1}$ and $q_{i+1}<p_i$, and the proof will be the same.

Now, suppose $q_i> p_{i+1}$ and $q_{i+1}>p_i$. We first prove a claim:
\begin{center}
	($\ast$) At least one entry of $[\bullet]^{(i+1)}_-$ is below all of $[\bullet]^{(i)}_-$.
\end{center}

Assume the contrary, then there exists one entry $[w]^{(i)}_-$ below all $[\bullet]_-^{(i+1)}$. By Proposition \ref{sign.tab.first.prop}, no positive entry above $[w]^{(i)}_-$ should be skipped in the arrangement of $\hat S_{\pm,i}$. As a result, all $[\bullet]^{(i+1)}_-$ have to be paired up with $[\bullet]^{(i)}_+$. This fact implies that $p_{i}\geqslant q_{i+1}$, which is contradict to the assumption $q_{i+1}>p_i$.

Here we prove condition-($p_i+p_{i+1}+1$) fails. Because of the claim we just proved, we let $[k_\ast]^{(i)}_-$ be the last entry of $[\bullet]_-^{(i)}$, and let $[j_\ast]_-^{(i+1)}$ be the first negative entry below $[k_\ast]^{(i)}_-$. Define
\begin{gather*}
x_1^\prime:=
	([\bullet]^{(i)} \sim [\bullet]^{(i+1)}) \cap ( [\bullet]_+^{(i+1)}<^\updownarrow [j_\ast]^{(i+1)}_- ) 
	;\\
x_2^\prime:=
	([\bullet]^{(i)} \sim [\bullet]^{(i+1)}) \cap ( [\bullet]_-^{(i)}<^\updownarrow [j_\ast]^{(i+1)}_- );\\
z_1^{\prime\prime}:=\#
([\times]\sim [\bullet]^{(i+1)}) \cap ( [\bullet]^{(i+1)}_+<^\updownarrow [j_\ast]^{(i+1)}_- );\\
z_2^{\prime\prime}:=\#
([\bullet]^{(i)}\sim [\times]) \cap ( [\bullet]_-^{(i)}<^\updownarrow [j_\ast]^{(i+1)}_- ) 
\Big).
\end{gather*}
Then
\begin{gather*}
j_\ast=x^\prime_1+x^\prime_2+z^{\prime\prime}_1+1;\\
k_\ast=x^\prime_1+x^\prime_2+z^{\prime\prime}_2.
\end{gather*}
In this case, $n_i=p_i+q_i$, and $m_i=p_i+p_{i+1}$. We compute
\begin{equation*}
	n_i-(m_i+1)+j_\ast=q_i-p_{i+1}+x^\prime_1+x^\prime_2+z^{\prime\prime}_1.
\end{equation*}
By claim ($\ast$) and Lemma \ref{Observations} (\ref{ls6}), all $[\bullet]_-^{(i)}$ and $[\bullet]_+^{(i+1)}$ are above $[j_\ast]^{(i+1)}$. Thus
\begin{equation*}
	q_i-p_i+1=z^{\prime\prime}_1-z^{\prime\prime}_2.
\end{equation*}
Eventually, we have $n_i-(m_i+1)+j_\ast=k_\ast$ and $[k-m+j_\ast]^{(i)}=[k_\ast]_-^{(i)}$. By the definition of $[k_\ast]_-^{(i)}$, we know that $[j_\ast]^{(i+1)}_->^\updownarrow [k-m+j_\ast]^{(i)}_-$. By Lemma \ref{Observations} (\ref{ls4}), $[j_\ast]^{(i+1)}_-\leqslant^\updownarrow [k-m+j_\ast]^{(i)}_-$. Hence condition-($p_i+p_{i+1}+1$) fails.
\end{proof}
\end{thm}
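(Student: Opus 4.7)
The plan is to prove that the overlap equals $m_i := \min\{p_i,q_{i+1}\}+\min\{p_{i+1},q_i\}$ by establishing two statements: condition-$m_i$ holds, and condition-$(m_i+1)$ fails. The first half will be essentially a one-line consequence of Lemma \ref{k-m+j.lemma.body} combined with Lemma \ref{Observations}; the second half will need the real combinatorial work.

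For condition-$m_i$, I would invoke Lemma \ref{k-m+j.lemma.body} directly: it gives $[n_i-m_i+j]^{(i)} \geqslant^\updownarrow [j]^{(i+1)}$ for every $j\leqslant m_i$, i.e.\ the relevant entries of $S_i$ sit weakly below the corresponding entries of $S_{i+1}$. Then Lemma \ref{Observations}(5) translates this vertical inequality into the horizontal inequality $[n_i-m_i+j]^{(i)} <^\leftrightarrow [j]^{(i+1)}$, which is exactly the restatement of condition-$m_i$ from the end of Section 3.1.

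For the failure of condition-$(m_i+1)$, I would split into cases based on the signs of $p_i-q_{i+1}$ and $q_i-p_{i+1}$. In the two \emph{aligned} cases (both $\geqslant 0$ or both $\leqslant 0$), one checks that $m_i$ already equals the length of the shorter of $S_i, S_{i+1}$, so $m_i+1>\min\{k,l\}$ and condition-$(m_i+1)$ is vacuously impossible. This leaves the two symmetric \emph{crossed} cases; I would treat $q_i>p_{i+1}$ and $q_{i+1}>p_i$ and note the other is identical up to swapping signs.

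In the crossed case, I would first prove the structural claim that at least one entry of $[\bullet]^{(i+1)}_-$ lies below every entry of $[\bullet]^{(i)}_-$. The argument is: if some $[w]^{(i)}_-$ were below all of $[\bullet]^{(i+1)}_-$, then no $[+]$ above $[w]$ could have been skipped (Definition \ref{sign.tab.consis.condition}), so every $[\bullet]^{(i+1)}_-$ would have to be paired with an entry of $[\bullet]^{(i)}_+$, forcing $p_i\geqslant q_{i+1}$, a contradiction. I would then let $[k_*]^{(i)}_-$ be the bottom-most $[-]$ of $S_i$ and $[j_*]^{(i+1)}_-$ the first $[-]$ of $S_{i+1}$ strictly below it, and carry out the counting of paired and skipped entries above $[j_*]^{(i+1)}_-$ to obtain the identity $n_i-(m_i+1)+j_* = k_*$. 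Since $[j_*]^{(i+1)}_- >^\updownarrow [k_*]^{(i)}_-$ and both are row-end entries of $\hat S^{i+1}_\pm$, Lemma \ref{Observations}(4) yields $[j_*]^{(i+1)}_- \leqslant^\leftrightarrow [k_*]^{(i)}_-$, contradicting the strict-left requirement at position $j_*$ of condition-$(m_i+1)$.

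The main obstacle will be the bookkeeping in the crossed case: tracking how the skips forced by Definition \ref{sign.tab.consis.condition} distribute the positive and negative entries above $[j_*]^{(i+1)}_-$ so that the cancellations produce the clean identity $n_i-(m_i+1)+j_*=k_*$. Isolating the counts $\#([\bullet]^{(i)}_-\sim[\times])$, $\#([\times]\sim[\bullet]^{(i+1)}_+)$, and the paired contributions above $[j_*]^{(i+1)}_-$, and using that $q_i-p_{i+1}$ governs the surplus of unpaired negatives in $S_i$, is where all the sign hypotheses are genuinely consumed; once that identity is in hand, Lemma \ref{Observations}(4) finishes the proof immediately.
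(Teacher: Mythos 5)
Your proposal is correct and follows essentially the same route as the paper: condition-$m_i$ via Lemma \ref{k-m+j.lemma.body} plus Lemma \ref{Observations}(5), vacuous failure of condition-$(m_i+1)$ in the two aligned sign cases, and in the crossed case the same structural claim (some $[\bullet]^{(i+1)}_-$ below all $[\bullet]^{(i)}_-$), the same choice of $[k_\ast]^{(i)}_-$ and $[j_\ast]^{(i+1)}_-$, the same counting identity $n_i-(m_i+1)+j_\ast=k_\ast$, and the same appeal to Lemma \ref{Observations}(4). The bookkeeping you flag as the main obstacle is exactly what the paper carries out with its counts of paired and skipped entries above $[j_\ast]^{(i+1)}_-$, and your statement of the final horizontal inequality is in fact cleaner than the paper's.
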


\section{Non-vanishing criterion for $A_\fq(\lambda)$ in the nice range}

Once we have the formula for the overlap, we can do a one-step application in Theorem \ref{Trapa.main.thm} by simply writing (\ref{Sing<Overlap.org}) as
$$
\min\{p_i,q_{i+1}\}+\min\{p_{i+1},q_i\}\geqslant \sing(S^\prime_i,S^\prime_{i+1}),
$$
where $S^\prime_i$ has $p_i$ $[+]$ and $q_i$ $[-]$ for all $i$. However, one still need to understand the algorithm mentioned in Theorem \ref{Trapa.main.thm} to figure out whether an $A_\mathfrak{q}(\lambda)$ is nonzero or not. In this section, we will prove that the situation will be much better when $A_\mathfrak{q}(\lambda)$ is in the nice range.

Using the notation in (\ref{lambda.coor}), $\lambda$ being in the nice range is equivalent to
\begin{equation}\label{lambda.nice.condition.equiv}
	\lambda_{i+1}-\lambda_i\leqslant \min\{n_i,n_{i+1} \},~\forall i.
\end{equation}

Write $\nu=\lambda+\rho$ as in (\ref{Lambda+RhoCoor}). Define
\begin{equation}
	\mathcal R_{ij}:=
	\{\nu^{(i)}_1,\cdots,\nu^{(i)}_{n_i}\}
	\cap
	\{\nu^{(j)}_1,\cdots,\nu^{(j)}_{n_j}\}.
\end{equation}
and let $R_{ij}:=\#\mathcal R_{ij}$. We write $\mathcal R_{ij}$ as $\mathcal R^{(i)}_{ij}$ when we consider it as a subset of $\{ \nu^{(i)}_i,\cdots,\nu^{(i)}_{n_i}\}$.

\begin{lemma}\label{lemma.Rij.formula}
	Let $S$ be the $\nu$-quasitableau attached to an $A_\mathfrak{q}(\lambda)$ in the nice range. Write $\lambda$ as in (\ref{lambda.coor}). Let $S=\coprod S_i$ be the $\mathfrak q$-consistent partition. Then the singularity of two adjacent difference-one skew columns is
	\begin{equation}\label{Rij.new.def}
		\sing(S_i,S_{i+1})=R_{i,i+1}=\begin{cases}
			\lambda_{i+1}-\lambda_i,\quad&\text{if}~\lambda_{i+1}>\lambda_i;\\
			0\quad& \text{if}~\lambda_{i+1}\leqslant \lambda_i.
		\end{cases}
	\end{equation}
\end{lemma}

\begin{thm}\label{main.result}
	Let $A_\fq(\lambda)$ be in the nice range, where $\fq$ is attached to $\{(p_1,q_1),\cdots,(p_r,q_r)\}$. Let $(S,S_\pm)$ be a pair of $\nu$-quasitableau and signed tableau constructed in Theorem \ref{AqLambdaToTAbleau}. Write $\lambda$ as in (\ref{lambda.coor}) and $\nu=\lambda+\rho$ as in (\ref{Lambda+RhoCoor}). Then $A_\fq(\lambda)$ is nonzero if and only if
	\begin{equation}\label{Aq.nice.condition}
		\lambda_{i+1}-\lambda_i\leqslant \min\{p_i,q_{i+1}\}+\min\{q_i,p_{i+1} \},~\forall~i
	\end{equation}
And when (\ref{Aq.nice.condition}) holds, $S$ is a $\nu$-antitableau.
\begin{proof}
Suppose $A_\fq(\lambda)$ is nonzero, then by Theorem \ref{Trapa.main.thm} and \ref{OverlapFormula}, we directly have (\ref{Aq.nice.condition}).

Now we prove (\ref{Aq.nice.condition}) implies that $S$ is a $\nu$-antitableau. The way we construct $S$ from $A_\fq(\lambda)$ indicates that the entry $[s]^{(t)}$ is $\nu_s^{(t)}$ for all $s$ and $t$. Let $\nu_j^{(i_0)}$ be an entry in the $\nu$-quasitableau. Let $x$ be the entry right and adjacent to $\nu_j^{(i_0)}$, and let $y$ be the entry below and adjacent to $\nu_j^{(i_0)}$. We shall prove that $\nu_j^{(i_0)}\geqslant x$ and $\nu_j^{(i_0)}> y$. Because $\lambda$ is in the nice range for $\fq$, $\nu^{(i_0)}_j$ either contributes to the singularity $R_{i_0,i_0+1}$ or is strictly greater than all $\{\nu^{(i_0+1)}_\bullet\}$.

\noindent\emph{Case I.} Suppose $\nu^{(i_0)}_j$ is strictly greater than all $\{\nu^{(i_0+1)}_\bullet\}$. Then $\nu^{(i_0)}_j$ is strictly greater than all $\{\nu^{(i^\prime)}_\bullet\}$ as long as $i^\prime> i_0$. Meanwhile, $\nu^{(i_0)}_j>\nu^{(i_0)}_{j^\prime}$ for all $j^\prime>j$. Notice that $x$ and $y$ are either in $\{\nu^{(i^\prime)}_\bullet| i^\prime> i_0\}$ or $\{\nu^{(i_0)}_{j^\prime}|j^\prime> j\}$. Thus $\nu_j^{(i_0)}$ is strictly greater than them.

\noindent\emph{Case II.} Suppose $\nu^{(i_0)}_j$ contributes to the singularity $R_{i_0,i_0+1}$. In other words, $\nu^{(i_0)}_j\in \mathcal R^{(i_0)}_{i_0,i_0+1}$. Suppose $\nu_{j_0}^{(i_0+1)}=\nu_j^{(i_0)}$. Because $\lambda$ is in the nice range for $\fq$, $\mathcal R^{(i_0)}_{i_0,i_0+1}$ are the bottom entries of the skew column $S_{i_0}$. Let $n_{i_0}$ be the length of $S_{i_0}$ and write $R=R_{i_0,i_0+1}$. Then $j=n_{i_0}-R+j_0$. Let $m=\overlap(S_{i_0},S_{i_0+1})$. Then by Lemma \ref{k-m+j.lemma.body}, $[n_{i_0}-m+j_0]^{(i_0)}\geqslant^\updownarrow[j_0]^{(i_0+1)}$. Meanwhile, (\ref{Aq.nice.condition}) implies $m\geqslant R$. Hence $[n_{i_0}-R+j_0]^{(i_0)}\geqslant^\updownarrow [n_{i_0}-m+j_0]^{(i_0)}$. Therefore, $[n_{i_0}-R+j_0]^{(i_0)}\geqslant^\updownarrow [j_0]^{(i_0+1)}$. And as a corollary, we have $[n_{i_0}-R+j_0]^{(i_0)}<^\leftrightarrow [j]^{(i_0+1)}$. As a conclusion, $\nu_j^{(i_0)}$ is lower than and strictly left to the entry $\nu_{j_0}^{(i_0+1)}$.

Suppose $x$ is in $\hat S_{\pm,i_x}$. If $i_x=i_0+1$, then by the conclusion in the previous paragraph, $x$ is either the entry $\nu_{j_0}^{(i_0+1)}$ or lower than $\nu_{j_0}^{(i_0+1)}$. As a result, $x\leqslant \nu_j^{(i_0)}$. If $i_x>i_0+1$, there are two situations. The trivial one is that $\nu_{j}^{(i_0)}$ is strictly greater than all of $\{\nu^{(i_x)}_\bullet\}$. In the other situation, we can consecutively apply the conclusion of the previous paragraph. Then we either end with a situation as case I, or there exists $\nu_{j_x}^{i_x}=\nu_j^{(i_0)}$. We conclude that, $\nu_{j_x}^{i_x}$ is on the right side of $\nu_j^{(i_0)}$ and no lower than $\nu_j^{(i_0)}$. Hence, $x\leqslant \nu_{j_x}^{i_x}= \nu_j^{(i_0)}$.

All the proofs for the entry $y$ can be copied here. But in this case, $y$ is strictly less than $\nu^{(i_0)}_j$ because it is below $\nu^{(i_0)}_j$. Now we have proved $S$ is a $\nu$-antitableau.

Suppose (\ref{Aq.nice.condition}) holds. Then $S$ is a $\nu$-antitableau. Theorem \ref{Trapa.main.thm} implies that this $A_\fq(\lambda)$ is nonzero.
\end{proof} 
\end{thm}

\begin{cor}
Theorem \ref{AqLambdaToTAbleau} is correct for $A_\fq(\lambda)$ in the nice range.
\end{cor}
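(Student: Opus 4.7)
The plan is to deduce this corollary directly from Theorem \ref{main.result} together with Theorem \ref{Trapa.main.thm}. The corollary asserts that for $\lambda$ in the nice range for $\fq$, the inductive construction of Theorem \ref{AqLambdaToTAbleau} already produces the correct pair of tableau parameters $(S, S_\pm)$ for $A_\fq(\lambda)$, with no further tableau-level correction needed.

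First I would dispose of the vanishing case. If $A_\fq(\lambda) = 0$, then by Theorem \ref{main.result} the inequality (\ref{Aq.nice.condition}) fails for some $i$, so by Theorem \ref{OverlapFormula} the initial partition already violates $\overlap(S_i, S_{i+1}) \geqslant \sing(S_i, S_{i+1})$ at that index, and Trapa's algorithm in Theorem \ref{Trapa.main.thm} terminates with $S^\prime = 0$. This matches the vanishing and there is nothing further to check.

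Next I would handle the nonzero case. If $A_\fq(\lambda) \neq 0$, then (\ref{Aq.nice.condition}) holds, and the second half of Theorem \ref{main.result} tells us that the quasitableau $S$ produced by the construction is already a genuine $\nu$-antitableau. Because $\lambda$ is in the nice range, the partition $S = \coprod S_i$ is in nice position by Definition \ref{Def-Nice}, and combining Theorem \ref{OverlapFormula} with (\ref{Aq.nice.condition}) yields $\overlap(S_i, S_{i+1}) \geqslant \sing(S_i, S_{i+1})$ for every $i$. Thus the initial pair $(S, \coprod S_i)$ already meets every requirement that Theorem \ref{Trapa.main.thm} places on the distinguished representative $S^\prime$.

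The step I expect to be the main obstacle is the remaining assertion that Trapa's algorithm, when given an input that already satisfies all these conditions, returns $S^\prime = S$ rather than some other equivalent object. By the remark following Theorem \ref{Trapa.main.thm}, the algorithm is driven entirely by failures of (\ref{Sing<Overlap.org}) in intermediate tableaux; since no such failure occurs here, the algorithm terminates at the initial step. Therefore $\Ann(A_\fq(\lambda)) = S^\prime = S$, and the signed tableau $S_\pm$ attached to $\fq$ as in Theorem \ref{AqLambdaToTAbleau} is the asymptotic support. A rigorous justification of this stability property requires a brief appeal to the detailed description of the algorithm in \cite[section 7]{Trapa2001}, but no combinatorial content beyond Theorem \ref{main.result} and Theorem \ref{OverlapFormula} is needed.
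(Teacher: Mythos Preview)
Your proposal is correct and follows essentially the same approach as the paper, which states the corollary without proof as an immediate consequence of Theorem \ref{main.result}. You supply the details the paper leaves implicit: once Theorem \ref{main.result} shows that (for nonzero $A_\fq(\lambda)$ in the nice range) the quasitableau $S$ is already a $\nu$-antitableau in nice position satisfying the overlap--singularity inequality, Trapa's algorithm has nothing to correct and $S^\prime = S$; your observation that this stability step ultimately rests on the description of the algorithm in \cite[section 7]{Trapa2001} is accurate and is the only point not self-contained in the present paper.
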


We end this section with an application of Theorem \ref{main.result} to $K$-types.

\begin{cor}
Let $\fq$ be attached to $\{(p_1,q_1),\cdots,(p_r,q_r)\}$. Suppose an $A_\fq(\lambda)$ is nonzero and in the nice range. Then $\lambda+2\rho(\fu\cap \fs)$ must be a highest weight of a $K$-type of $A_\fq(\lambda)$.
\begin{proof}
Let
$$\lambda=(
\overbrace{\lambda_1,\cdots,\lambda_1}^{n_1}|
\cdots\cdots|
\overbrace{\lambda_r,\cdots\lambda_r}^{n_r}
)\in\ft_\re^\ast,$$
where $n_i=p_i+q_i$. For the convenience of writing, we write it as
$$
\lambda=(\prescript{(n_1)}{}\lambda_1 | \prescript{(n_2)}{}\lambda_2 |\cdots|\prescript{(n_r)}{}\lambda_r).
$$
Here $\prescript{(n)}{}\bullet$ means an $n$-tuple whose entries have the same value. The weight $2\rho(\mu\cap\fs)$ is
$$
2\rho(\mu\cap \fs)=\Big( \prescript{(p_1)}{}\eta_{1,+};\prescript{(q_2)}{}\eta_{1,-}|\prescript{(p_2)}{}\eta_{2,+};\prescript{(q_2)}{}\eta_{2,-}|\cdots|\prescript{(p_r)}{}\eta_{r,+};\prescript{(q_r)}{}\eta_{r,-}
\Big),
$$
where
$$
\eta_{j,+}=-\sum_{l<j}q_l+\sum_{t>j}q_t;\qquad \eta_{j,-}=-\sum_{l<j}p_l+\sum_{t>j}p_t.
$$
Then $\lambda+2\rho(\fu\cap\fs)$ is $\fk$-dominant if and only if
\begin{gather*}
	\lambda_{j}-\sum_{l<j}q_l+\sum_{t>j}q_t\geqslant \lambda_{j+1}-\sum_{l<j+1}q_l+\sum_{t>j+1}q_t;\\
	\lambda_{j}-\sum_{l<j}p_l+\sum_{t>j}p_t\geqslant \lambda_{j+1}-\sum_{l<j+1}p_l+\sum_{t>j+1}p_t.
\end{gather*}
After simplification, we have
\begin{equation*}
	\lambda_{j+1}-\lambda_j\leqslant p_{j+1} +p_j;\qquad	\lambda_{j+1}-\lambda_j\leqslant q_{j+1} +q_j.
\end{equation*}
Theorem \ref{main.result} directly implies the equations above. Thus $\lambda+2\rho(\fu\cap\fs)$ is $\fk$-dominant. Using the results of bottom layers from Corollary 5.85 of \cite{KnappVoganBook} and its following prose, we know that $\lambda+2\rho(\fu\cap\fs)$ is the highest weight of a $K$-type.
\end{proof}
\end{cor}

\section{Application to Dirac index}

Recently, Dong and Wong provided an equivalent condition when a weakly fair $A_\fq(\lambda)$ in the Dirac series has nonzero Dirac index in \cite{DongWong21}. On the one hand the advantage of this condition is that it provides a formula for the Dirac index for $A_\fq(\lambda)$, but on the other hand it is not easy to quickly check this equivalent condition which is based on a system of inequalities of non-negative integers. In this section, we reduce the range to the nice range, and then develop an easier-to-check non-vanishing criterion for the Dirac index of $A_\fq(\lambda)$.

\subsection{The strengthened H.P.-condition}

The Dirac index of $A_\fq(\lambda)$ is naturally zero when either $A_\fq(\lambda)$ or $H_D(A_\fq(\lambda))$ is zero. Thus we assume $A_\fq(\lambda)$ is nonzero and lives in the Dirac series. Vogan gave a conjecture about the infinitesimal characters of the $\tilde K$-types in the Dirac cohomology, and Huang and Pand\v{z}i\'{c} proved it in \cite{HP2002}. Its application, by Dong and Wong, in the case of $A_\fq(\lambda)$ of $U(p,q)$ gives the following necessary conditions for nonzero Dirac cohomology. And we call it the H.P.-condition, where H.P. stands for the authors of \cite{HP2002}.

\begin{lemma}\label{HP-condition}\cite{DongWong21}
	Assume that $H_D(A_\fq(\lambda))\neq 0$ and write $\lambda+\rho=\nu$ as
\begin{equation}\label{nu.coor}
\nu=(
	\nu_1^{(1)},\cdots,\nu_{n_1}^{(1)}|
	\cdots\cdots|
	\nu_1^{(r)},\cdots,\nu_{n_r}^{(r)}
	).
\end{equation}
	Then in the coordinate of $\nu$,
	\begin{enumerate}
		\item no entry can appear more than twice;
		\item there are at most $\min\{p,q\}$ distinct entries appearing twice.
	\end{enumerate}
\end{lemma}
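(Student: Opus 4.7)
The plan is to invoke the Huang--Pand\v{z}i\'c theorem (formerly Vogan's conjecture, \cite{HP2002}), which asserts that for any irreducible $(\fg,K)$-module $X$ with $H_D(X)\neq 0$, the infinitesimal character of $X$ is $W(\fg,\ft)$-conjugate to $\lambda_K+\rho_c$, where $\lambda_K$ is the highest weight of some $\widetilde K$-type occurring in $H_D(X)$ and $\rho_c$ is half the sum of $\Delta^+(\fk,\ft)$. Applied to $X=A_\fq(\lambda)$, whose infinitesimal character is $\nu=\lambda+\rho$, this forces the multiset of coordinates of $\nu$ to agree with the multiset of coordinates of $\lambda_K+\rho_c$ for some $K$-dominant $\lambda_K$.

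Next I would write $\rho_c$ explicitly. For $G=U(p,q)$ with $K=U(p)\times U(q)$,
\[
\rho_c=\bigl(\tfrac{p-1}{2},\tfrac{p-3}{2},\ldots,\tfrac{-(p-1)}{2};\;\tfrac{q-1}{2},\tfrac{q-3}{2},\ldots,\tfrac{-(q-1)}{2}\bigr).
\]
A $K$-dominant $\lambda_K=(a_1,\ldots,a_p;b_1,\ldots,b_q)$ satisfies $a_1\geqslant\cdots\geqslant a_p$ and $b_1\geqslant\cdots\geqslant b_q$. Because the entries of $\rho_c$ within each of the two blocks decrease by exactly $1$, the sum $\lambda_K+\rho_c$ is \emph{strictly} decreasing within each block; in particular, the $p$ entries of the first block are pairwise distinct, and the $q$ entries of the second block are pairwise distinct.

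To conclude, observe that the coordinate permutation in (\ref{nu.coor}) (organized by the $\fl$-blocks) differs from the $K$-grouping $(p;q)$, but the statements (1) and (2) concern the multiset of all entries of $\nu$, which is a Weyl-invariant datum. Since every value of $\lambda_K+\rho_c$ is attained at most once in the plus block and at most once in the minus block, the same holds for $\nu$, giving (1). Moreover, a value appears twice in $\nu$ if and only if it lies in both blocks of $\lambda_K+\rho_c$; the set of such common values embeds both into a $p$-element set and into a $q$-element set, so its cardinality is at most $\min\{p,q\}$, proving (2).

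No genuine obstacle is expected: the whole argument is a direct unwrapping of Huang--Pand\v{z}i\'c together with the explicit form of $\rho_c$ for $U(p,q)$. The only point worth being careful about is keeping clearly in mind the distinction between the $\fl$-indexed coordinates in (\ref{nu.coor}) and the $K$-indexed block structure used to read off $\rho_c$, but since all claims are about the unordered multiset of entries this is harmless.
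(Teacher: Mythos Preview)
Your argument is correct and is exactly the intended one. The paper does not supply its own proof of this lemma; it cites the result from \cite{DongWong21} and remarks that it is an application of the Huang--Pand\v{z}i\'c theorem \cite{HP2002} to $A_\fq(\lambda)$ for $U(p,q)$, which is precisely what you have unwound.
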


They also gave a non-vanishing criterion for $\DI(A_\fq(\lambda))$ in the same paper.

\begin{lemma}\label{DiracIndexNonzero}\cite{DongWong21}
Use the notation $R_{ij}$ in section 4, and consider the inequalities for non-negative integers $\{a_{ij},b_{ij}\}$, $1\leqslant i<j\leqslant k$:
	\begin{equation}\label{DI=0.Ineq.ori}
		\begin{cases}
			a_{ij}+b_{ij}=R_{ij},\\
			\big(\sum_{x>i} a_{ix}+\sum_{y<i}b_{yi}\big)
			\leqslant p_i,\\
			\big(\sum_{x<i} a_{xi}+\sum_{y>i} b_{iy}\big)
			\leqslant q_i.
		\end{cases}
	\end{equation}
	There is a solution $\{a_{ij},b_{ij}\}$ to (\ref{DI=0.Ineq.ori}) if and only if $\DI(A_\fq(\lambda))\neq 0$. (There are five equations in \cite{DongWong21}. But two of them are redundant.)
\end{lemma}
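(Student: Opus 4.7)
The plan is to translate both sides of the equivalence into statements about $\widetilde K$-highest weights and match them directly. The H.P.-condition of Lemma~\ref{HP-condition}, together with Vogan's conjecture proved in \cite{HP2002}, already severely restricts which $\widetilde K$-types can occur in $H_D(A_\fq(\lambda))$: each such highest weight must arise from $\nu-\rho_n$ by pairing up the repeated coordinates across blocks of $\nu$ in~(\ref{nu.coor}) and then reshuffling them so the result is $\fk$-dominant in the coordinates~(\ref{k.fit.coordinant}). The variables $(a_{ij},b_{ij})$ are designed to encode exactly such reshufflings: for each pair $i<j$ and each entry of $\mathcal R_{ij}$, one decides whether the duplicate is absorbed into the ``positive'' part of block $i$ and the ``negative'' part of block $j$ (counted by $a_{ij}$) or the symmetric choice (counted by $b_{ij}$). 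The equality $a_{ij}+b_{ij}=R_{ij}$ then just says every repeat is accounted for, while the two inequalities are precisely the $\fk$-dominance constraints~(\ref{k.dominant}) split according to the $p_i$ positive and $q_i$ negative slots available in block $i$.

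First I would define, for each tuple $(a_{ij},b_{ij})$ satisfying (\ref{DI=0.Ineq.ori}), an explicit candidate highest weight $\mu_{(a,b)}$ by the above redistribution procedure, and verify directly that admissibility of $(a,b)$ is equivalent to $\fk$-dominance of $\mu_{(a,b)}$. Next I would compute $\DI(A_\fq(\lambda))$ as a virtual $\widetilde K$-character using the standard $K$-multiplicity formula for cohomologically induced modules combined with the decomposition $\spin_G=\spin^+_G\oplus\spin^-_G$: each $\mu_{(a,b)}$ should appear as the $\rho$-shifted highest weight of a summand of $A_\fq(\lambda)\otimes\spin^\pm_G$, and the alternating sum yields a coefficient of $\pm 1$ in front of $\mu_{(a,b)}$ with a sign depending only on $\fq$, not on $(a,b)$. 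Conversely, if $\DI(A_\fq(\lambda))\neq 0$, any surviving $\widetilde K$-type must be of the form $\mu_{(a,b)}$ by the H.P.-constraint, and reading off $\fk$-dominance of its highest weight recovers a solution of (\ref{DI=0.Ineq.ori}).

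The hard part will be ruling out cancellation, i.e. showing that distinct admissible tuples $(a,b)$ yield distinct weights $\mu_{(a,b)}$, so that no $\pm 1$ contributions cancel in the Euler-characteristic sum. This reduces to a combinatorial injectivity statement: once the integers $a_{ij}, b_{ij}$ are fixed, the $\fk$-dominant reordering of coordinates within each block is forced, and hence $(a,b)\mapsto \mu_{(a,b)}$ is one-to-one. A secondary subtlety is matching the sign of each contribution with the parities of $\dim(\fu\cap\fs^+)$ and the number of transpositions made in the reshuffle, which should fall out of a direct count once a reference ordering is fixed; but the injectivity of the parametrisation and the absence of cancellation is what one really needs to justify carefully, since all other ingredients are either combinatorial bookkeeping or standard consequences of the Huang--Pand\v{z}i\'{c} theorem.
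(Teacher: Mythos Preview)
This lemma is quoted in the paper directly from \cite{DongWong21} and is not proved here; the paper only remarks that two of the five inequalities in the original source are redundant. So there is no ``paper's own proof'' to compare your proposal against.

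That said, your outline is broadly the shape of the argument in \cite{DongWong21}: one expresses $\DI(A_\fq(\lambda))$ as an alternating sum of $\widetilde K$-characters via the spin-module decomposition, identifies the surviving highest weights with sign-assignments of the repeated coordinates of $\nu$, and then matches the feasibility of such an assignment with the system~(\ref{DI=0.Ineq.ori}).

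However, the place you label as ``secondary'' is the actual crux, and your treatment of it has a gap. You assert that the sign of the contribution of $\mu_{(a,b)}$ ``depends only on $\fq$, not on $(a,b)$''. This is precisely what has to be \emph{proved}; it is not automatic from the Euler-characteristic formalism, and if it failed the equivalence would collapse. Separately, your injectivity claim is stated for tuples $(a_{ij},b_{ij})$, but these are only \emph{counts} of how many entries of $\mathcal R_{ij}$ go to each side, not a specification of which entries go where; so $\mu_{(a,b)}$ is not well-defined from $(a,b)$ alone without a further canonical choice, and you have not explained why different finer choices with the same $(a,b)$ do not produce cancelling contributions. Until you pin down (i) a canonical lift from $(a,b)$ to an actual placement of entries and (ii) a uniform-sign argument for all such placements, the ``$\Leftarrow$'' direction of the equivalence is not established.
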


Remark 5.8 in \cite{DongWong21} conjectured that if a weakly fair $A_\fq(\lambda)$ which satisfies the conditions in Lemma \ref{HP-condition} but (\ref{DI=0.Ineq.ori}) has no non-negative integer solution, then this $A_\fq(\lambda)$ must be the zero module. In fact, this conjecture is not correct. We find out that the problem happens at part (2) of the H.P.-condition in Lemma \ref{HP-condition}. We will modify the H.P.-condition and give a non-vanishing criterion for $\DI(A_\fq(\lambda))$ different from Lemma \ref{DiracIndexNonzero}, but only for $\lambda$ in the nice range.

For a nonzero $A_\fq(\lambda)$ in the nice range, the first part of the H.P.-condition has the following effects on the $R_{ij}$ defined in Lemma \ref{DiracIndexNonzero}.

\begin{lemma}\label{HP.condition.Rij}
	Suppose $\lambda$ only satisfies part (1) of the H.P.-condition and that $\lambda$ is in the nice range for $\fq$, where $\fq$ is attached to a sequence $\{(p_1,q_1),\cdots,(p_r,q_r)\}$. Let $\nu=\lambda+\rho$ and use the coordinate as (\ref{nu.coor}). Write $n_i=p_i+q_i$, then
	\begin{enumerate}
		\item \label{Ri.i+1.neq.0} $R_{ij}\neq0$ if and only if $j=i+1$.
		\item \label{R12+R23<n} $R_{i-1,i}+R_{i,i+1}\leqslant n_i$.
	\end{enumerate}
	\begin{proof}
		Assume the contrary of (1). Then there exist four indices $a,b,s$, and $t$ such that $s+1<t$ and $\nu_a^{(s)}=\nu_b^{(t)}$. We can pick an integer $k$ such that $s<k<t$. By the assumption of $\lambda$ being in the nice range for $\fq$, we have
		$$
		\nu^{(k)}_{n_k}\leqslant \nu^{(s)}_{n_s}\leqslant\nu_a^{(s)}=
		\nu_b^{(t)}\leqslant\nu_1^{(t)}\leqslant\nu_1^{(k)}.
		$$
		The inequality above shows that there must exist one entry $\nu^{(k)}_{x}$ equal to $\nu_a^{(s)}$ and $\nu_b^{(t)}$, which means in the coordinates of $\nu$, at least three entries are the same. This fact is contradict to part (1) of the H.P-condition. The proof of (1) is done.
		
		Claim (2) is obtained by direct computation. We may assume both of $R_{i-1,i}$ and $R_{i,i+1}$ are nonzero. Then
		$$
		R_{i-1,i}=\nu^{(i)}_1-\nu^{(i-1)}_{n_{i-1}}+1,~\text{and}~ R_{i,i+1}=\nu^{(i+1)}_1-\nu^{(i)}_{n_i}+1.
		$$
		Because $R_{i-1,i+1}=0$, we must have $\nu^{(i+1)}_1<\nu^{(i-1)}_{n_{i-1}}$. Recall that each $\lambda_i\in \Z$ and the difference of two entries of the coordinate of $\rho$ is also an integer. So, we have $\nu^{(i+1)}_1\leqslant\nu^{(i-1)}_{n_{i-1}}-1$
		Thus
		\begin{align*}
			R_{i-1,i}+R_{i,i+1}&=(\nu^{(i)}_1-\nu^{(i-1)}_{n_{i-1}}+1)+(\nu^{(i+1)}_1-\nu^{(i)}_{n_i}+1)\\
			&=(\nu^{(i)}_1-\nu^{(i)}_{n_i}+1)+(\nu^{(i+1)}_1-\nu^{(i-1)}_{n_{i-1}}+1)\\
			&=n_i+(\nu^{(i+1)}_1-\nu^{(i-1)}_{n_{i-1}}+1)\leqslant n_i.
		\end{align*}
		The proof is done.
	\end{proof}
\end{lemma}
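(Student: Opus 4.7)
The plan is to exploit two structural facts about $\nu = \lambda + \rho$: each block $\{\nu^{(i)}_1, \ldots, \nu^{(i)}_{n_i}\}$ is a run of $n_i$ consecutive integers (since $\lambda$ is constant on each block and $\rho$ contributes consecutive integers), and the nice-range hypothesis forces both the top $\nu^{(i)}_1$ and the bottom $\nu^{(i)}_{n_i}$ of these runs to decrease weakly in $i$. These two observations make the intersection $\mathcal{R}_{ij}$ easy to read off from the block endpoints.

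For part (1), I would argue by contradiction. Suppose a value $v$ lies in both block $i$ and block $j$ with $j \geqslant i+2$, and pick any index $k$ with $i < k < j$. The nice condition applied to the pairs $(i,k)$ and $(k,j)$ gives
$$\nu^{(k)}_{n_k} \leqslant \nu^{(i)}_{n_i} \leqslant v \leqslant \nu^{(j)}_1 \leqslant \nu^{(k)}_1.$$
Since block $k$ fills the integer interval $[\nu^{(k)}_{n_k},\, \nu^{(k)}_1]$ without gaps, $v$ appears in block $k$ as well. Thus $v$ occurs at least three times among the coordinates of $\nu$, contradicting part (1) of the H.P.-condition.

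For part (2), I would compute both sides directly. Assuming $R_{i-1,i}$ and $R_{i,i+1}$ are both nonzero (otherwise the inequality follows from the trivial bounds $R_{i-1,i}, R_{i,i+1} \leqslant n_i$), the nice ordering identifies each intersection as an interval of consecutive integers whose cardinalities are $R_{i-1,i} = \nu^{(i)}_1 - \nu^{(i-1)}_{n_{i-1}} + 1$ and $R_{i,i+1} = \nu^{(i+1)}_1 - \nu^{(i)}_{n_i} + 1$. Adding and regrouping,
$$R_{i-1,i} + R_{i,i+1} = \bigl(\nu^{(i)}_1 - \nu^{(i)}_{n_i} + 1\bigr) + \bigl(\nu^{(i+1)}_1 - \nu^{(i-1)}_{n_{i-1}} + 1\bigr) = n_i + \bigl(\nu^{(i+1)}_1 - \nu^{(i-1)}_{n_{i-1}} + 1\bigr).$$
Part (1) already gives $R_{i-1,i+1} = 0$, which combined with the nice monotonicity means that the intervals covered by blocks $i-1$ and $i+1$ are disjoint, hence $\nu^{(i+1)}_1 < \nu^{(i-1)}_{n_{i-1}}$. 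The bracketed term is therefore $\leqslant 0$, completing the bound. The only place that asks for care is justifying the explicit formula for $R_{i-1,i}$ (and for $R_{i,i+1}$), which just means checking that in the nice range $\mathcal{R}_{ij}$ really is an unbroken interval with the stated endpoints; once the structural observations in the first paragraph are in place, this is purely arithmetic and I do not expect a substantive obstacle.
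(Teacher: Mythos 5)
Your proposal is correct and follows essentially the same route as the paper: part (1) is the same sandwich argument using nice-range monotonicity of block endpoints plus the fact that each block is an unbroken integer interval, and part (2) is the same endpoint formula for $R_{i-1,i}$ and $R_{i,i+1}$ with the same regrouping and the same appeal to $R_{i-1,i+1}=0$. Your explicit remark that the blocks are runs of consecutive integers is a useful clarification that the paper leaves implicit, but it is not a different argument.
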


\begin{defi}\label{strengthened.H.P}
	Let $A_\fq(\lambda)$ be in the nice range, and $\nu$, $R_{ij}$ be defined as in Lemma \ref{HP-condition}. Suppose $\fq$ is attached to the $\{(p_1,q_1),\cdots,(p_r,q_r)\}$. We call the followings the \textbf{strengthened H.P.-condition},
\begin{enumerate}
	\item No entry of coordinates of $\nu$ can appear more than twice;
	\item All $R_{i,i+1}$ should satisfy
	\begin{equation}\label{strengthened.H.P.(2)}
		\sum_{i=k}^{l-1} R_{i,i+1}\leqslant
		\min\left\{\sum_{i=k}^l p_i,\sum_{i=k}^l q_i\right\},\forall~ 1\leqslant k<l\leqslant r.
	\end{equation}
\end{enumerate}
Obviously, the second condition heavily depends on the structure of $\fq$. We say that $\lambda$ satisfies the strengthened H.P.-condition for $\fq$ if both two conditions hold.
\end{defi}

\subsection{Non-vanishing criterion for $\DI(A_\fq(\lambda))$ for $\lambda$ in the nice range}
\begin{thm}\label{DI.equiv.condition.thm}
Let $A_\fq(\lambda)$ be a nonzero module in the nice range, where $\fq$ is attached to a sequence $\{(p_1,q_1),\cdots,(p_r,q_r)\}$. Then $\DI(A_\fq(\lambda))\neq 0$ if and only if  $\lambda$ satisfies the strengthened H.P.-condition for $\fq$ (see Definition \ref{strengthened.H.P}).
\begin{proof}
When $A_\fq(\lambda)$ is in the nice range, claim (1) of Lemma \ref{HP.condition.Rij} can reduce the inequalities (\ref{DI=0.Ineq.ori}) to
\begin{equation}\label{DI=0.Ineq.Red}
	\begin{cases}
	a_{i,i+1}+b_{i,i+1}=R_{i,i+1},	&1\leqslant i\leqslant r-1, \\
	a_{i,i+1}+b_{i-1,i}\leqslant p_i, &1\leqslant i\leqslant r,	\\
	a_{i-1,i}+b_{i,i+1}\leqslant q_i, &1\leqslant i\leqslant r,
	\end{cases}
\end{equation}
where $a_{i,i+1}$, $b_{i,i+1}$ are $0$ when $i=0$ or $i=r$. And the non-vanishing criterion in Theorem \ref{main.result} can be written as
\begin{equation}\label{Sing<Overlap}
	R_{i,i+1}\leqslant \min\{p_i,q_{i+1}\}+\min\{q_i,p_{i+1}\}.
\end{equation}
In the proof, we write $n_i=p_i+q_i$.

\noindent\emph{Part I.} Assume $\DI(A_\fq(\lambda))\neq0$. A direct corollary is that $H_D(A_\fq(\lambda))\neq 0$, which implies that part (1) of the strengthened H.P.-condition holds. Let $\{a_{i,i+1},b_{i,i+1}\}_{1\leqslant i\leqslant r-1}$ be a non-negative integer solution of (\ref{DI=0.Ineq.ori}). Sum up the inequalities $b_{i-1,i}+a_{i,i+1}\leqslant p_i$ where $i$ runs from $k$ to $l$ ($k<l$). Then we have
\begin{align*}
\sum_{i=k}^l p_i & \geqslant \sum_{i=k}^l (a_{i,i+1}+b_{i-1,i})=a_{l,l+1}+b_{k-1,k}+\sum_{i=k}^{l-1}(a_{i,i+1}+b_{i,i+1})\\
&=a_{l,l+1}+b_{k-1,k}+\sum_{i=k}^{l-1} R_{i,i+1}\geqslant \sum_{i=k}^{l-1} R_{i,i+1}.
\end{align*}
For the same reason, we also have $\sum_{i=k}^l q_i\geqslant \sum_{i=k}^{l-1} R_{i,i+1}.$ Thus part (2) of the strengthened H.P.-condition holds.

\noindent\emph{Part II.} Assume $\lambda$ satisfies the strengthened H.P.-condition. We claim the following two inequalities hold:
\begin{gather}
 \sum_{i=k}^{l-1} R_{i,i+1}\leqslant
\min\{p_k,q_{k+1}\}+\sum_{i=k+1}^{l-1} p_i+\min\{p_l,q_{l-1}\};\label{stronger.ver.DI.equiv.con.1}\\
 \sum_{i=k}^{l-1} R_{i,i+1}\leqslant
\min\{q_k,p_{k+1}\}+\sum_{i=k+1}^{l-1} q_i+\min\{q_l,p_{l-1}\}.\label{stronger.ver.DI.equiv.con.2}\tag{$\ref{stronger.ver.DI.equiv.con.1}^\prime$}
\end{gather}
It suffices to prove the first line. When $l-k=1$, (\ref{stronger.ver.DI.equiv.con.1}) is exactly (\ref{Sing<Overlap}). 

When $l-k=2$, there are two summands $R_{k,k+1}$ and $R_{k+1,k+2}$. Suppose $q_{k+1}\geqslant p_k$ and $q_{k+1}\geqslant p_{k+2}$. Then (\ref{stronger.ver.DI.equiv.con.1}) becomes $R_{k,k+1}+R_{k+1,k+2}\leqslant p_k+p_{k+1}+p_{k+2}$, which is covered by (\ref{strengthened.H.P.(2)}). Suppose $q_{k+1}< p_k$ or $q_{k+1}<p_{k+2}$. We assume $q_{k+1}<p_k$ and keep on the proof. Then by (\ref{R12+R23<n}) of Lemma \ref{HP.condition.Rij}, we have
\begin{align*}
	R_{k,k+1}+R_{k+1,k+2}&\leqslant n_{k+1}=\min\{p_k,q_{k+1}\}+p_{k+1}
	\\
	&\leqslant\min\{p_k,q_{k+1}\}+p_{k+1}+\min\{p_{k+2},q_{k+1}\}.
\end{align*}

When $l-k>3$, we first prove a weaker version, which is
\begin{equation}\label{stronger.ver.DI.equiv.con.3}
	\sum_{i=k}^{l-1} R_{i,i+1}\leqslant
	\min\{p_k,q_{k+1}\}+\sum_{i=k+1}^l p_i. \tag{$\ref{stronger.ver.DI.equiv.con.1}^{\prime\prime}$}
\end{equation}
Suppose $p_k\leqslant q_{k+1}$, then (\ref{stronger.ver.DI.equiv.con.3}) reduces to (\ref{strengthened.H.P.(2)}). Suppose $p_k>q_{k+1}$. We add up the following two inequalities from (\ref{R12+R23<n}) of Lemma \ref{HP.condition.Rij} and (\ref{strengthened.H.P.(2)}),	
\begin{gather*}
	R_{k,k+1}+R_{k+1,k+2}\leqslant n_{k+1};\\
	R_{k+2,k+3}+\cdots+R_{l-l,l}\leqslant p_{k+3}+\cdots+p_{l}.
\end{gather*}
Then (\ref{stronger.ver.DI.equiv.con.3}) is proved. Repeat this process on (\ref{stronger.ver.DI.equiv.con.3}) and replace $p_l$ by $\min\{p_l,q_{l-1} \}$. Then (\ref{stronger.ver.DI.equiv.con.1}) is obtained.

Now we construct a solution $\{a_{i,i+1},b_{i,i+1}\}_{1\leqslant i\leqslant r-1}$ to (\ref{DI=0.Ineq.Red}) by induction on $r$. When $r=2$, it is easy to check that the following is a solution:
$$
a_{12}=\min\{p_1,q_2,R_{12}\},\quad b_{12}=R_{12}-\min\{p_1,q_2,R_{12}\}.
$$

For general cases when $r\geqslant 3$, we proceed with two subcases:

\noindent\emph{Subcase I.} Suppose $R_{r-1,r}\leqslant \min\{p_{r-1},q_{r-1},p_r,q_r\}$.

Consider the $A_{\fq_{(r-1)}}(\lambda_{(r-1)})$ where $\lambda_{(r-1)},\fq_{(r-1)}$ are defined as:
\begin{center}
	$\fq_{(r-1)}$ is attached to $\{(p_1,q_1),\cdots,(p_{r-1},q_{r-1})\}$; \\
	$\lambda_{(r-1)}:=(
	\lambda_1,\cdots,\lambda_1|
	\cdots\cdots|
	\lambda_{r-1},\cdots,\lambda_{r-1}).$
\end{center}
Clearly, $A_{\fq_{(r-1)}}(\lambda_{(r-1)})$ is nonzero. By inductive assumption on $r$, there is a solution $\{a_{i,i+1},b_{i,i+1}\}_{ 1\leqslant i\leqslant r-2}$ to (\ref{DI=0.Ineq.Red}) for $A_{\fq_{(r-1)}}(\lambda_{(r-1)})$. In particular, we have
\begin{equation}\label{ineq.ab.r-1}
a_{r-2,r-1}\leqslant q_{r-1},\quad b_{r-2,r-1}\leqslant p_{r-1}.
\end{equation}
Take
$$
a_{r-1,r}=\min\{p_{r-1}-b_{r-2,r-1},R_{r-1,r}\},\quad b_{r-1,r}=R_{r-1,r}-\min\{p_{r-1}-b_{r-2,r-1},R_{r-1,r}\}.
$$
Apparently, $a_{r-1,r}+b_{r-1,r}=R_{r-1,r}$ and both $a_{r-1,r}$ and $b_{r-1,r}$ are non-negative. We aim to prove $\{a_{i,i+1},b_{i,i+1}\}_{ 1\leqslant i\leqslant r-1}$ is a solution to (\ref{DI=0.Ineq.Red}) for $A_{\fq}(\lambda)$. It is sufficient to prove the following four inequalities:
$$
\begin{cases}
(\text{E1})~a_{r-1,r}+b_{r-2,r-1}\leqslant p_{r-1}, \\
(\text{E2})~a_{r-2,r-1}+b_{r-1,r}\leqslant q_{r-1}, \\
(\text{E3})~b_{r-1,r}\leqslant p_r,\\
(\text{E4})~a_{r-1,r}\leqslant q_r.
\end{cases}
$$
The definition of $a_{r-1,r}$ ensures (E1). The assumption $R_{r-1,r}\leqslant \min\{p_{r-1},q_{r-1},p_r,q_r\}$ proves (E4). When $p_{r-1}-b_{r-2,r-1}\geqslant R_{r-1,r}$, (E2) is reduced to (\ref{ineq.ab.r-1}), and (E3) is trivial because $b_{r-1,r}=0$. When $p_{r-1}-b_{r-2,r-1}< R_{r-1,r}$, (E2) and (E3) become
$$
\begin{cases}
(\text{E2*})~R_{r-2,r-1}+R_{r-1,r}\leqslant n_{r-1}, \\
(\text{E3*})~R_{r-1,r}+b_{r-2,r-1}\leqslant p_{r-1}+p_r.
\end{cases}
$$
(E2*) is covered by Lemma \ref{HP.condition.Rij}. (E3*) is obtained by adding up (\ref{ineq.ab.r-1}) and $R_{r-1,r}\leqslant p_r$.

\noindent\emph{Subcase II}. Suppose $R_{r-1,r}>\min\{p_{r-1},q_{r-1},p_r,q_r\}$. Define
$$d=R_{r-1,r}-\min\{p_{r-1},q_{r-1},p_r,q_r\}.$$

We assume $\min\{p_r,q_{r-1}\}\leqslant \min\{p_{r-1},q_r\}$. Then $d=R_{r-1,r}-\min\{p_r,q_{r-1}\}$. Then by (\ref{Sing<Overlap}), we have $d\leqslant \min\{p_{r-1},q_r\}$. Therefore, both $p_{r-1}-d$ and $q_r-d$ are non-negative. Consider the following pair $(\lambda^\prime,\fq^\prime)$ defined by
\begin{center}
	$\fq^\prime$ is attached to $\{(p_1,q_1),\cdots,(p_{r-1}-d,q_{r-1}),(p_r,q_r-d)\}$; \\
	$\lambda^\prime=(
	\lambda_1,\cdots,\lambda_1|
	\cdots|\overbrace{\lambda_{r-1},\cdots,\lambda_{r-1}}^{n_{r-1}-d}|\overbrace{\lambda_r-d,\cdots,\lambda_r-d}^{n_r-d}
	).$
\end{center}
Here
$$
	R^\prime_{r-1,r}=(\lambda_r-d)-\lambda_{r-1}=R_{r-1,r}-d.
$$
(If $\min\{p_r,q_{r-1}\}< \min\{p_{r-1},q_r\}$, we change $\mathfrak q^\prime$ to be attached to $\{(p_1,q_1),\cdots,(p_{r-1},q_{r-1}-d),(p_r-d,q_r)\}$. The rest of the proof are mostly the same.)
It is not hard to see that $\lambda^\prime$ is still in the nice range for $\fq^\prime$ by checking (\ref{lambda.nice.condition.equiv}). We shall prove $A_{\fq^\prime}(\lambda^\prime)$ is nonzero and $\lambda^\prime$ satisfies the strengthened H.P.-condition for $\fq^\prime$. Once the proof is done, it is easy to see $A_{\fq^\prime}(\lambda^\prime)$ falls in subcase I.

In order to prove that $A_{\fq^\prime}(\lambda^\prime)$ is nonzero, it suffices to prove the following two inequalities from Theorem \ref{main.result} and (\ref{Sing<Overlap}):
\begin{gather}
	R^\prime_{r-1,r}\leqslant \min\{p_{r-1}-d,q_r-d \}+\min\{q_{r-1},p_r\},\label{subcaseII.I}\\
	R_{r-2,r-1}\leqslant \min\{p_{r-2},q_{r-1}\}+\min\{q_{r-2},p_{r-1}-d\}\label{subcaseII.II}.
\end{gather}
For (\ref{subcaseII.I}),
\begin{align*}
\min\{p_{r-1}-d,q_r-d \}+\min\{q_{r-1},p_r\}&=\min\{p_{r-1},q_r \}+\min\{q_{r-1},p_r\}-d\\
&\leqslant R_{r-1,r}-d=R^\prime_{r-1,r}.
\end{align*}
For (\ref{subcaseII.II}), it reduces to (\ref{Sing<Overlap}) when $q_{r-2}\leqslant p_{r-1}-d$. Thus we may assume $q_{r-2}> p_{r-1}-d$. By (\ref{stronger.ver.DI.equiv.con.1}),
$$
R_{r-2,r-1}+R_{r-1,r}\leqslant \min\{p_r,q_{r-1}\}+p_{r-1}+\min\{p_{r-2},q_{r-1}\}.
$$
Noticing that $R_{r-1,r}=d+\min\{p_r,q_{r-1}\}$, we have
$$R_{r-2,r-1}\leqslant (p_{r-1}-d)+\min\{p_{r-2},q_{r-1}\}=\min\{q_{r-2},p_{r-1}-d\}+\min\{p_{r-2},q_{r-1}\}.$$

In order to prove that $\lambda^\prime$ satisfies the strengthened H.P.-condition for $\fq^\prime$, it suffices to prove the following two inequalities:
\begin{gather}
	R_{r-2,r-1}+R^\prime_{r-1,r}\leqslant n_{r-1}-d,\label{Aq'lambda'I}\\
	R_{k,k+1}+\cdots+R_{r-2,r-1}+R^\prime_{r-1,r} \leqslant \min\left\{\sum_{i=k}^r p_i,\sum_{i=k}^r q_i\right\}-d. \label{Aq'lambda'II}
\end{gather}
For (\ref{Aq'lambda'I}),
$$R_{r-2,r-1}+R^\prime_{r-1,r}=R_{r-2,r-1}+R_{r-1,r}-d\leqslant n_{r-1}-d.$$
For (\ref{Aq'lambda'II}), it suffices to prove
$$
R_{k,k+1}+\cdots+R_{r-2,r-1}+R_{r-1,r}^\prime\leqslant p_k+\cdots+p_r-d.
$$
By (\ref{stronger.ver.DI.equiv.con.1}), we have
$$
R_{k,k+1}+\cdots+R_{r-2,r-1}+R_{r-1,r}\leqslant p_k+\cdots+p_{r-1}+\min\{p_r,q_{r-1}\}.
$$
Then (\ref{Aq'lambda'II}) is implied by the inequality above and the fact $R_{r-1,r}-\min\{p_r,q_{r-1}\}=d$.

At this point, we see $A_{\fq^\prime}(\lambda^\prime)$ is nonzero and falls in subcase I. Let $\{a_{i,i+1}^\prime,b_{i,i+1}^\prime\}_{1\leqslant i\leqslant r-1}$ be a solution to (\ref{DI=0.Ineq.Red}) for $A_{\fq^\prime}(\lambda^\prime)$. Then let $\{a_{i,i+1},b_{i,i+1}\}$ be identical to $\{a_{i,i+1}^\prime,b_{i,i+1}^\prime\}$ except $a_{r-1,r}$. Take $a_{r-1,r}=a_{r-1,r}^\prime+d$. Compare the two versions of (\ref{DI=0.Ineq.Red}) from $A_{\mathfrak{q}^\prime}(\lambda^\prime)$ and $A_\mathfrak{q}(\lambda)$. We see that only three places are different:
$$
A_{\mathfrak{q}^\prime}(\lambda^\prime)~\text{version:}
\begin{cases}
	(a_{r-1,r}-d)+b_{r-1,r}=R_{r-1,r}-d,\\
	(a_{r-1,r}-d)+b_{r-2,r-1}\leqslant p_{r-1}-d,\\
	a_{r-1,r}-d\leqslant q_{r-1}-d;
\end{cases}
$$
$$
A_{\mathfrak{q}}(\lambda)~\text{version:}
\begin{cases}
a_{r-1,r}+b_{r-1,r}=R_{r-1,r},\\
	a_{r-1,r}+b_{r-2,r-1}\leqslant p_{r-1},\\
	a_{r-1,r}\leqslant q_{r-1}.
\end{cases}
$$
They are obviously equivalent. Therefore, $\{a_{i,i+1},b_{i,i+1}\}_{1\leqslant i\leqslant r-1}$ is a solution to (\ref{DI=0.Ineq.Red}) for $A_\mathfrak{q}(\lambda)$.

By Lemma \ref{DiracIndexNonzero}, we know $\DI(A_\fq(\lambda))\neq 0$.
\end{proof}
\end{thm}

\begin{cor}
Let a nonzero $A_\fq(\lambda)$ be in the nice range, and $\fq$ be attached to the sequence $\{(p_1,q_1),\cdots,(p_r,q_r)\}$. When $r\leqslant 3$, $H_D(A_\fq(\lambda))\neq 0$ if and only if the H.P.-condition stated in Lemma \ref{HP-condition} holds.
\begin{proof}
	It suffices to prove that the H.P.-condition is a sufficient condition for $H_D(A_\fq(\lambda))\neq 0$. When $r\leqslant 3$, the strengthened H.P.-condition is covered by the H.P.-condition and the nonzero criterion for $A_\fq(\lambda)$. By Theorem \ref{DI.equiv.condition.thm}, $\DI(A_\fq(\lambda))\neq 0$. Hence, $H_D(A_\fq(\lambda))\neq 0$.
\end{proof}
\end{cor}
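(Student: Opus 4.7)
The plan is to deduce this corollary from Theorem~\ref{DI.equiv.condition.thm}. Since $\DI(A_\fq(\lambda))\neq 0$ forces $H_D(A_\fq(\lambda))\neq 0$, and since the H.P.-condition is already necessary by Lemma~\ref{HP-condition}, the entire content of the corollary is the sufficiency direction: under the standing assumption that $A_\fq(\lambda)$ is nonzero and $\lambda$ satisfies the H.P.-condition, I must verify the strengthened H.P.-condition of Definition~\ref{strengthened.H.P}, after which Theorem~\ref{DI.equiv.condition.thm} does the rest.

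Part~(1) of the two conditions is literally the same, so only Part~(2) requires work. I would enumerate the inequalities~(\ref{strengthened.H.P.(2)}) for every subinterval $I=\{k,\ldots,l\}\subseteq\{1,\ldots,r\}$ with $r\leq 3$. For $r=1$ there is nothing to check; for $r=2$ only $I=\{1,2\}$ appears, giving $R_{12}\leq\min\{p,q\}$; for $r=3$ one has the three intervals $\{1,2\}$, $\{2,3\}$, and $\{1,2,3\}$. The ``full'' interval $I=\{1,\ldots,r\}$ always yields exactly H.P.-condition~(2): by Lemma~\ref{HP.condition.Rij}(1), the entries appearing twice lie in the disjoint union $\bigcup_i \mathcal R_{i,i+1}$ (disjointness follows from H.P.-condition~(1), since an entry common to two $\mathcal R_{i,i+1}$'s would appear three times), so their total count equals $\sum_i R_{i,i+1}$, and this is bounded by $\min\{p,q\}$ precisely by H.P.-condition~(2).

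The remaining inequalities, namely the ``length-two'' intervals $\{1,2\}$ and $\{2,3\}$ when $r=3$, are not controlled by H.P.-condition~(2) itself. They are handled by the nonvanishing criterion for $A_\fq(\lambda)$ in the nice range, equation~(\ref{Sing<Overlap}): $R_{i,i+1}\leq\min\{p_i,q_{i+1}\}+\min\{q_i,p_{i+1}\}$. Since the right-hand side is bounded above by both $p_i+p_{i+1}$ and $q_i+q_{i+1}$, one obtains $R_{i,i+1}\leq\min\{p_i+p_{i+1},q_i+q_{i+1}\}$, which is exactly what the strengthened H.P.-condition demands.

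There is no real obstacle once Theorem~\ref{DI.equiv.condition.thm} is in hand; the only subtle point is to notice that disjointness of the $\mathcal R_{i,i+1}$'s genuinely uses H.P.-condition~(1), and that the single-pair overlap bound~(\ref{Sing<Overlap}) is strong enough to absorb the ``short'' intervals. The hypothesis $r\leq 3$ is essential to this shortcut: for $r\geq 4$ one encounters intermediate intervals such as $\{1,2,3\}\subsetneq\{1,\ldots,r\}$ whose inequality $R_{12}+R_{23}\leq\min\{p_1+p_2+p_3,q_1+q_2+q_3\}$ is neither a consequence of H.P.-condition~(2) nor of~(\ref{Sing<Overlap}) applied termwise, so the argument would break down there.
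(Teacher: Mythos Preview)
Your proof is correct and follows exactly the approach of the paper's own argument, which simply asserts that ``the strengthened H.P.-condition is covered by the H.P.-condition and the nonzero criterion for $A_\fq(\lambda)$'' and invokes Theorem~\ref{DI.equiv.condition.thm}. You have supplied the details behind that assertion---namely, that the full-interval inequality is H.P.-condition~(2) itself while the length-two subintervals are absorbed by~(\ref{Sing<Overlap})---and your remark on why $r\leq 3$ is sharp is a welcome addition.
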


\noindent\emph{Remark.} Due to the definition of the Dirac index, it is possible that the positive and negative parts of the Dirac cohomology get completely canceled in the Dirac index. The corollary above indicates that there is no such cancellation in the case of $A_\fq(\lambda)$ of $U(p,q)$ when the structure of $\fq$ is relatively simple. In the general case when $r>3$, we still believe that the Dirac index will not completely canceled out.

\end{document}